\documentclass{article}
\usepackage{graphicx}
\usepackage[a4paper, total={6in, 8in}]{geometry}

\usepackage{amssymb}
\usepackage[fleqn]{amsmath}
\usepackage{multirow}
\usepackage{pdflscape}
\usepackage{rotating}
\usepackage{tikz}
\usepackage{graphicx}
\usepackage{tikz-qtree}
\usetikzlibrary{trees,shapes,arrows} 
\usetikzlibrary{shapes,arrows}
\usetikzlibrary{calc}
\usepackage{booktabs}      
\usepackage{adjustbox}     
\usepackage{rotating}
\usepackage{pgfplots}
\usepackage{amsfonts,amsthm,epsfig,epstopdf,titling,url,array}
\usepackage{thmtools,thm-restate}
\usepackage{afterpage}
\usepackage{floatpag}
\usepackage{tablefootnote}
\usepackage{threeparttable}
\setcounter{secnumdepth}{4}
\usepackage{multicol}
\usepackage{longtable}

\theoremstyle{definition}
\newtheorem{defn}{Definition}[section]

\newtheorem{exmp}{Example}[section]

\newtheorem{theorem}{Theorem}[section]
\newtheorem{corollary}{Corollary}[theorem]

\newcommand{\contradiction}{%
\begin{tikzpicture}[rotate=45,x=0.5ex,y=0.5ex]
\draw[line width=.2ex] (0,2) -- (3,2) (0,1) -- (3,1) (1,3) -- (1,0) (2,3) -- (2,0);
\end{tikzpicture}
}

\usepackage{graphicx}
\definecolor{eafbest}{HTML}{FDE725}
\definecolor{eafworst}{HTML}{440154}
\newcommand\cbox[1]{\textcolor{#1}{\rule{0.7em}{0.7em}}}
\usepackage[normalem]{ulem} 


\def\genbox#1#2#3#4#5#6{
    \leavevmode\raise#4bp\hbox to#5bp{\vrule height#5bp depth0bp width0bp
    \pdfliteral{q .5 w \csname #2COLOR\endcsname\space RG
                       \csname #3PDF\endcsname{#5}{#6} S Q
             \ifx1#1 q \csname #2COLOR\endcsname\space rg 
                       \csname #3PDF\endcsname{#5}{#6} f Q\fi}\hss}}


\usepackage{authblk}

\title{A novel multi-stage multi-scenario multi-objective optimisation framework for adaptive robust decision-making under deep uncertainty}

\author[1,2]{Babooshka Shavazipour\footnote{Corresponding author\\
Email addresses: babooshka.b.shavazipour@jyu.fi, b.shavazipour@gmail.com (Babooshka Shavazipour)}} 


\author[2]{Theodor J.Stewart}

\affil[1]{University of Jyvaskyla, Faculty of Information Technology, P.O. Box 35 (Agora), FI-40014 University of Jyvaskyla, Finland}
\affil[2]{Department of Statistical Sciences, University of Cape Town, Rondebosch, Cape Town, South Africa}

\usepackage{natbib}
\usepackage{graphicx}
 
\date{}

\begin{document}
\maketitle
\begin{abstract} 
Many real-world decision-making problems involve multiple decision-making stages and various objectives. Besides, most of the decisions need to be made before having complete knowledge about all aspects of the problem leaves some sort of uncertainty. Deep uncertainty happens when the degree of uncertainty is so high that the probability distributions are not  confidently knowable. In this situation, using wrong probability distributions lead to failure. Scenarios, instead, should be used to evaluate the consequences of any decisions in different plausible futures and find a robust solution. In this study, we proposed a novel multi-stage multi-scenario multi-objective optimisation framework for adaptive robust decision-making under deep uncertainty. Two approaches, named multi-stage multi-scenario multi-objective and two-stage moving horizon, have been proposed and compared. 
Finally, the proposed approaches are applied in a case study of sequential portfolio selection under deep uncertainty and the robustness of their solutions is discussed.

\textbf{Keywords:} \textit{Multi-objective optimisation, Multi-criteria decision-making,  Scenario planning,  Goal programming, Deep uncertainty, Dynamic robustness.}
    
\end{abstract}

\section{Introduction}
  Sustainable decision-making requires simultaneous consideration of multiple conflicting objectives, e.g., economics versus environmental and social objectives. Typically, there is no single optimal solution satisfying all the conflicting objectives at the same time. Instead, there are several compromises called \emph{Pareto optimal} with different trade-offs. Therefore, the decision-maker must find the most balanced Pareto optimal solution based on their preferences. Because the Pareto optimal solutions are unknown a priori, additional support is needed to generate those solutions (e.g., utilising mathematical models) and help the decision-maker in trade-off analyses and decision-making. \emph{Multi-objective optimisation} methods have been designed to provide this kind of support to decision-makers (see, e.g., \cite{hwang1979, Steuer1986, miettinen1999} and references therein). They also applied in many application areas (e.g., \cite{Abdelaziz2001, Abdelaziz2009, Masri2016, Abdelazizn2017, Sindhya2017, Eyvindson2018a, Montonen2019, AghaeiPour2021, Shavazipour2022KneeOA, Shavazipour2022forest, saini2023interactive}). 
 
 Moreover, many real-life decision problems are tied to unpredictable events related to future and human behaviours, such as climate change, natural hazards, socioeconomic, pandemics, military/ cyber-attacks, political crises, and religious beliefs. Decision-makers need to make decisions with incomplete knowledge about the consequences, all available options, and the future state of the world (referred to as \emph{scenarios} from now on) \citep{VanderHeijden1996, Shavazipour2019}. Because of different sources of uncertainties and the lack of historical data, precise forecasting of future outcomes and transitions is impossible. In this case, the probabilities of plausible future scenarios are unknown, or various experts cannot agree upon one, classified as \emph{deep uncertainty} \citep{Bankes2002, Lempert2003, Walker2013, Shavazipour2019}. Also, the outcomes and performances of any decision may vary in different scenarios, making the decision-making process too convoluted \citep{Shavazipour2021b}. 
 The class of optimisation problems considers multiple objectives under deep uncertainty, known as \emph{multi-scenario} (or scenario-based) \emph{multi-objective optimisation} \citep{Watson2017, Eker2018, Shavazipour2019, Shavazipour2021a}. \emph{Scenarios}, in this context, are utilised to structure and organise the present uncertainty by defining different plausible scenarios for uncertain statements of the problem. Using scenarios to represent uncertainty in a problem can allow the decision-maker to think, discuss, and plan for various plausible representations of an uncertain event \citep{Durbach2012, Stewart2013} without considering the probabilities of their occurrences.

In contrast to a wide variety of methods developed for stochastic and probabilistic multi-objective optimisation problems (see, e.g., \cite{Abdelaziz2007, Abdelazizn2017, Masri2016}), classified as \emph{mild} uncertainty \citep{Shavazipour2019}, only a few studies can be found in the literature investigating multi-objective optimisation under \emph{deep} uncertainty (e.g., \cite{Kasprzyk2013, Quinn2017, Watson2017, Eker2018, Shavazipour2019, Bartholomew2020, Shavazipour2021a}), despite the need. Nonetheless, handling deep uncertainty 
has turned into a hot topic in various fields, particularly after worldwide crises in the third decade of the twenty-first century---i.e., the COVID-19 pandemic and the Russian invasion of Ukraine that led to economic and energy crises. 

The most recommended approach to cope with deep uncertainty is to \emph{monitor and adapt} \citep{marchau2019decision}, meaning that the system should always be ready to be adapted if a particular scenario manifests itself. Indeed, one needs to identify various scenario-specific adaptation (or contingency) plans in advance and implement the right one after scenario realisation. This way, one can avoid over-conservatism by implementing the relevant adaptation plan after scenario realisation. 
\cite{Shavazipour2019} proposed a two-stage multi-scenario, multi-objective optimisation structure to handle deep uncertainty and identify dynamic-robust Pareto optimal solutions. 
In the two-stage structure, the decision-making process has been divided into two stages: In the first stage, knowledge about some states of the problem is lacking, and this is when the \emph{initial decision} needs to be made and implemented. The second stage occurs after unfolding the uncertain states when the adaptation (or recourse) decisions can be implemented. Therefore, a more reasonable initial decision, which is common among all plausible scenarios, and a set of \emph{adaptation decisions} relating to different scenarios, one for every plausible scenario, is made. However, only the initial decision will be implemented in the first stage, while the implementation of the adaptation decision is postponed to the second stage after scenario realisation. Then, the relevant adaptation decision, which depends on the unfolded scenario, would be implemented. 
This approach has also been successfully applied in a South-African sugar industry case study \citep{Shavazipour2020}.

However, real processes are continuous 
and scenarios continuously unfold, generally with some dependency. Hence, decisions (or plans) must be frequently adapted in more than two stages. Note that, in this sense, \emph{``stages"} are artificial to structure thinking of an \emph{indefinite future}. 
Furthermore, although the two-stage structure lets us evaluate the consequences of the initial decision after achieving every plausible scenario, the long-term effects of the initial decision and the outcomes of the adaptation decisions need to be investigated in a more extended structure. 

Therefore, this paper aims to address the above-mentioned gaps and contributes to the multi-objective robust decision-making by proposing a \emph{multi-stage multi-scenario multi-objective robust optimisation (MS2MORO) framework} for dynamic-robust decision-making. 
To develop a robust strategy for an indefinite future, one may consider the following options: (1) look multiple steps ahead---always plan as if it is a \emph{T-stage horizon}; or (2) look one step ahead---always plan as if it is a two-stage planning window and then start another two-stage planning window after the first scenario realisation stage (\emph{two-stage moving horizon}).
Clearly, considering more stages adds more uncertainty and complexity to the problem, which naturally requires additional computation resources. 
Indeed, it introduces a new trade-off between getting better performances and computation resources. 
Accordingly, in this paper, we also propose a two-stage moving horizon approach (within a T-stage structure) and compare it with the other proposed multi-stage approach. 
Finally, we apply both proposed approaches to a sequential portfolio selection example and compare the results.
 
This paper is organised as follows: 
Section \ref{back} briefly reviewed necessary concepts and notations. 
The proposed multi-stage multi-scenario multi-objective optimisation framework, relevant concepts and models, and the proposed moving horizon approach are discussed in Section \ref{ms2moo}. In Section \ref{portfolio}, all the proposed structures and approaches are illustrated and compared in detail through a sequential portfolio selection application under deep uncertainty. Further robustness analysis and discussion are performed in Section \ref{dis}, before we conclude in Section \ref{3sconclusion}.

\section{Background}\label{back}

\subsection{Multi-scenario Multi-objective Optimisation}\label{msmop}
A multi-scenario multi-objective optimisation problem under deep uncertainty can be formulated as follows \citep{Shavazipour2021b}:
 
\begin{equation}
\begin{array}{rll}
\mbox{Min}  & {\{f_{1p}({\mathbf{x}}),  \dots, f_{mp}({\mathbf{x}}) \}}, & p \in \Omega \\ 
\mbox{subject to}  & {\mathbf{x}} \in {\mathbf{X}}, & \\
\end{array}\label{eq:sbmop}
\end{equation}
where $m$ $(\geq 2)$ is the number of objective functions, the scenario space represents by $\Omega = \{1, \dots, s\}$; $s$ is the number of scenarios;
$f_{ip} $ demonstrates an objective function $i$ in a scenario $p$; ${\mathbf{X}}$, in the \emph{decision space} $\Re^n $, includes all feasible solutions, each represented by a vector of decision variables $ \mathbf{x} =  (x_1, \dots, x_n)^T $. An \emph{objective vector} $\mathbf{z}_p =(f_{1p}({\mathbf{x}}),  \dots, f_{mp}({\mathbf{x}}))^T$ is the image of a solution $\mathbf{x}$ in a scenario $p$ , and the \emph{objective space} $\Re^k$ is constructed by the set of all objective vectors.

A decision vector (feasible solution) $\mathbf{x}^*$ is Pareto optimal with respect to the scenario set $\Omega$, if there does not exist any other feasible decision vector $\hat{\mathbf{x}} \in \mathbf{X}$ with better values in one objective function in one scenario without impairing any objective function in any scenario. Mathematically speaking $\forall i,p, f_{ip}(\mathbf{x}^*) \leq f_{ip}(\hat{\mathbf{x}})$ and $\exists j,q, s.t., f_{jq}(\mathbf{x}^*) < f_{jq}(\hat{\mathbf{x}})$---i.e., dominated at least in one objective in one scenario.

In such a complex decision-making problem, a decision-maker requires to compare the trade-offs between objectives in multiple scenarios as well as the robustness of the Pareto optimal solutions and choose the most preferred solution based on their preferences. In the multi-objective optimisation literature (without uncertainty consideration), different types of preferences have been introduced. One of the most common preference types is called a \emph{reference point} which consists of \emph{aspiration levels} representing desired objective values to the decision-maker. In multi-scenario problems, the aspiration levels can be extended to the desired values of objective functions in various scenarios (see, e.g., \citet{Shavazipour2021a, Shavazipour2022forest}).

\subsection{Goal programming (the reference point method)}\label{GP}
Different approaches have been developed to solve a multi-objective optimisation problem in the literature. One way is to transform a multi-objective optimisation into a single-objective problem (often using the so-called scalarization function) 
and then solve that single-objective problem with a suitable single-objective solver. One of the most popular \emph{multi-criteria decision-making} (MCDM) techniques using the single-objective transformation idea is the goal programming (GP) approach \citep{Charnes1955, charnes1961}. Among different variants, the \emph{reference point method} (RPM), is shown to always directly generate Pareto optimal solutions \citep{Ogryczak1994}. The reference point method (also known as generalised goal programming) can be formulated as the following:

\begin{equation}\label{eq:RGP}
\begin{array}{l l l}
Min \; & \max\limits_{i} \{ \omega_{i} (f_i(\mathbf{x}) - g_i) \} + \epsilon \sum\limits_{i=1}^m \omega_{i} (f_i(\mathbf{x}) - g_i) & \\
 s.t.   & \mathbf{x} \in \mathbf{X} & \\
 \end{array}
\end{equation}
where $\omega_{i}>0$ $(i=1, ..., m)$ are the importance weighting of deviations from the goals set by the decision-maker.
The reference point and its desired objective values (aspiration levels) are represented by $\mathbf{g}$ and $g_{i}$, respectively. $\epsilon$ is an arbitrarily small, positive number to guarantee the Pareto efficiency of the solutions. \textbf{x} is a vector of decision variables, and \textbf{X} is the set of feasible solutions. 

When we have multiple scenarios, the above formulation has an additional dimension and then a multi-scenario form of the reference point goal programming model can be formulated as follows:

\begin{equation}\label{eq:sbgp}
\begin{array}{lll}
   Min  & \max\limits_{i , p} \{ \omega_{ip}(f_{ip}(\mathbf{x}) - g_{ip}) \} + \epsilon \sum\limits_{i=1}^m \sum\limits_{p=1}^s  \omega_{ip}(f_{ip}(\mathbf{x}) - g_{ip}) & \\
     s.t. & \mathbf{x} \in \mathbf{X}, & \\
\end{array}
\end{equation}%
where
weights $\omega_{ip}>0$ represents the importance of deviation from the goals set for the objective $i$ in scenario $p$.
The desired objective values (aspiration levels) in different scenarios are represented by $g_{ip}$, respectively.

\section{Proposed multi-stage multi-scenario multi-objective optimisation framework}\label{ms2moo}

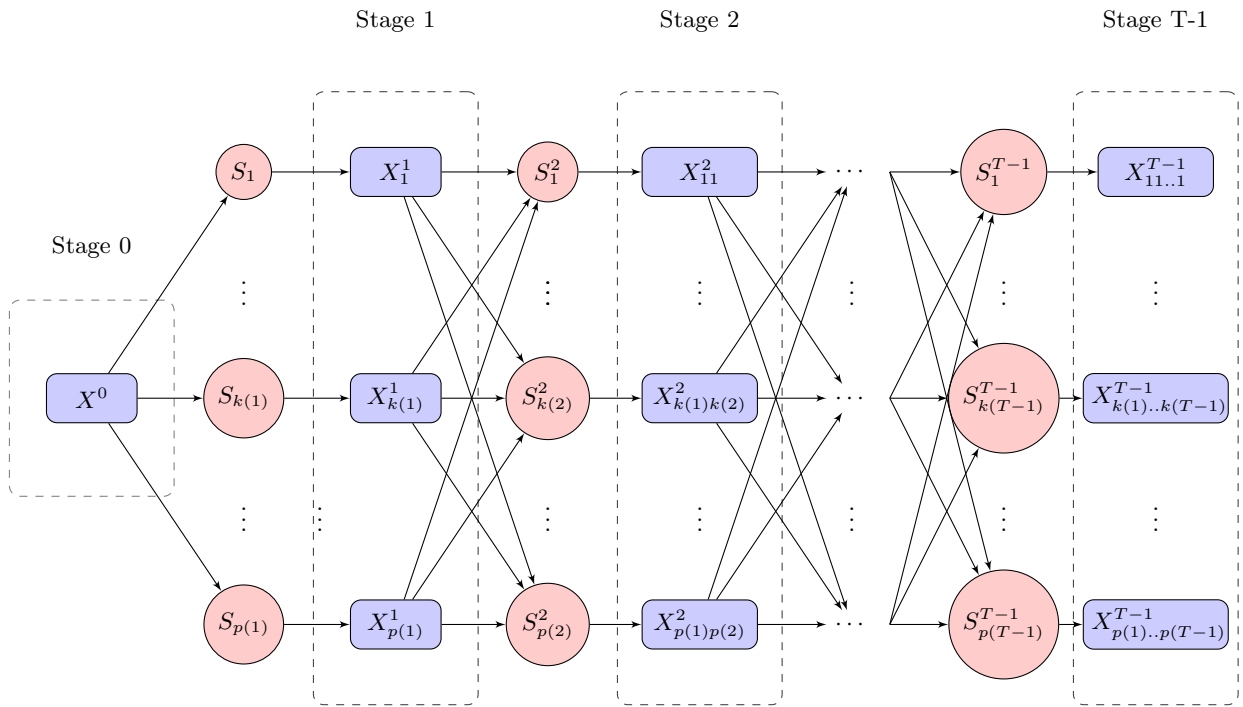
\begin{figure}[!htbp]
\tikzstyle{decision} = [diamond, draw, fill=blue!20, 
    text width=5.5em, text badly centered, node distance=3cm, inner sep=0pt]
\tikzstyle{block} = [rectangle, draw, fill=blue!20, 
    text width=3em, text centered, rounded corners, minimum height=2em]
    \tikzstyle{block2} = [rectangle, draw, fill=blue!20, 
    text width=4em, text centered, rounded corners, minimum height=2em]
    \tikzstyle{block2w} = [rectangle, draw, fill=blue!20, 
    text width=5.2em, text centered, rounded corners, minimum height=2em]
\tikzstyle{block3} = [rectangle, draw, dashed, gray,
    text width=6em, text centered, rounded corners, minimum height=8em]
\tikzstyle{block4} = [rectangle, draw, dashed, darkgray,
    text width=6em, text centered, rounded corners, minimum height=25em]

\tikzstyle{line} = [draw, -latex']
\tikzstyle{cloud} = [draw, circle,fill=red!20, node distance=1.5cm,
    minimum height=1.5em]
\tikzstyle{emt} =[node distance=1cm]

\begin{center}
   \small 
\begin{tikzpicture}[node distance = 1cm, auto]

    \node [block] (x0) at (0,3) {$X^0$};
     \node [block3] (x00) at (0,3) {};    
     \node (x000) at (0,5) {Stage 0};   
    
    \node [cloud] (s1) at (2,6) {$S_1$};
        \node (s10) at (2,4.5) {$\vdots$}; 
    \node [cloud] (sk1) at (2,3) {$S_{k(1)}$};
    \node (s100) at (2,1.5) {$\vdots$};
    \node [cloud] (sp1) at (2,0) {$S_{p(1)}$};
     
    \node [block] (x1) at (4,6) {$X_1^1$};
    \node (s11) at (6,4.5) {$\vdots$};
    \node [block] (xk1) at (4,3) {$X_{k(1)}^1$};
    \node (s12) at (3,1.5) {$\vdots$};
    \node [block] (xp1) at (4,0) {$X_{p(1)}^1$};
    \node [block4] (x10) at (4,3) {}; 
    \node (x100) at (4,8) {Stage 1}; 
    
    \node [cloud] (s2) at (6,6) {$S_1^{2}$};
    \node (s21) at (6,4.5) {$\vdots$};
    \node [cloud] (sk2) at (6,3) {$S_{k(2)}^{2}$};
    \node (s22) at (6,1.5) {$\vdots$};
    \node [cloud] (sp2) at (6,0) {$S_{p(2)}^{2}$};
    
    \node [block2] (x2) at (8,6) {$X_{11}^2$};
    \node (s31) at (8,4.5) {$\vdots$};
    \node [block2] (xk2) at (8,3) {$X_{k(1)k(2)}^2$};
    \node (s32) at (8,1.5) {$\vdots$};
    \node [block2] (xp2) at (8,0) {$X_{p(1)p(2)}^2$};
        \node [block4] (x20) at (8,3) {}; 
          \node (x200) at (8,8) {Stage 2}; 

     \node [] (s3) at (10,6) {$\cdots$};
    \node (s31) at (10,4.5) {$\vdots$};
    \node [] (sk3) at (10,3) {$\cdots$};
    \node (s42) at (10,1.5) {$\vdots$};
    \node [] (sp3) at (10,0) {$\cdots$};

    \node [cloud] (st) at (12,6) {$S_1^{T-1}$};
    \node (s31) at (12,4.5) {$\vdots$};
    \node [cloud] (skt) at (12,3) {$S_{k(T-1)}^{T-1}$};
    \node (s52) at (12,1.5) {$\vdots$};
    \node [cloud] (spt) at (12,0) {$S_{p(T-1)}^{T-1}$};

    \node [block2] (xt) at (14,6) {$X_{11..1}^{T-1}$};
    \node (s31) at (14,4.5) {$\vdots$};
    \node [block2w] (xkt) at (14,3) {$X_{k(1)..k(T-1)}^{T-1}$};
    \node (s62) at (14,1.5) {$\vdots$};
    \node [block2w] (xpt) at (14,0) {$X_{p(1)..p(T-1)}^{T-1}$};
        \node [block4] (xt0) at (14,3) {}; 
          \node (xt00) at (14,8) {Stage T-1}; 

    \path [line] (x0) --   (s1); 
    \path [line] (x0) --   (sk1);   
    \path [line] (x0) --   (sp1);   
  
    \path [line] (s1) --  (x1);

    \path [line] (sk1) --  (xk1);

    \path [line] (sp1) --  (xp1);

     \path [line] (x1) --   (s2);
     \path [line] (x1) --   (sk2);   
     \path [line] (x1) --   (sp2);   
   
   \path [line] (xk1) --   (s2);
     \path [line] (xk1) --   (sk2);   
     \path [line] (xk1) --   (sp2);   
   
   \path [line] (xp1) --   (s2);
     \path [line] (xp1) --   (sk2);   
     \path [line] (xp1) --   (sp2);   
   
   \path [line] (s2) --  (x2);

    \path [line] (sk2) --  (xk2);

    \path [line] (sp2) --  (xp2);

     \path [line] (x2) --   (s3);
     \path [line] (x2) --   (sk3);   
     \path [line] (x2) --   (sp3);   
   
     \path [line] (xk2) --   (s3);
     \path [line] (xk2) --   (sk3);   
     \path [line] (xk2) --   (sp3);   
   
     \path [line] (xp2) --   (s3);
     \path [line] (xp2) --   (sk3);   
     \path [line] (xp2) --   (sp3);   
   
     \path [line] (10.5,6) --   (st);
     \path [line] (10.5,6) --   (skt);   
     \path [line] (10.5,6) --   (spt);   
   
     \path [line] (10.5,3) --   (st);
     \path [line] (10.5,3) --   (skt);   
     \path [line] (10.5,3) --   (spt);   

     \path [line] (10.5,0) --   (st);
     \path [line] (10.5,0) --   (skt);   
     \path [line] (10.5,0) --   (spt);   

    \path [line] (st) --  (xt);

    \path [line] (skt) --  (xkt);

    \path [line] (spt) --  (xpt);

\end{tikzpicture}
\caption{T-stage decision-making process with \textit{$p(1)\times p(2)\times ...\times p(T-1)$} scenarios.}\label{fig:Tstage} 
\end{center}
\end{figure}

In a T-stage structure (as shown in Figure \ref{fig:Tstage}), the decision process is divided into $T$ stages during the time horizon, which is arbitrarily chosen depending on the problem \footnote{Note that real processes are continuous, and by the time horizon, we try to structure and model a specific stage of an infinite future.}. These stages do not show some specific points in the time horizon and could be adapted to different conditions. We use stages to distinguish separate periods of the decision-making process and scenario realisation,---the time that an uncertain state of the problem is unfolded and the consequences of our previous decisions manifest themselves by then.

At the first stage (\texttt{stage 0}), an initial decision ($ x^0 \in \mathbf{X}^0 $, $\mathbf{X}^0$ is a feasible set of the initial decisions) must be made and implemented without any knowledge about future scenarios, a scenario-free decision. Then one needs to wait and see which scenarios will realise in the following stages in the future. Accordingly, suitable contingency decisions must be implemented at each stage. Indeed, one should identify a chain of scenario-dependant contingency decisions for each plausible scenario path in the following stages (to be prepared for the future). However, only one contingency decision set will be implemented after realising the future scenarios. The union of the initial decision ($ x^0$) and $T-1$ relevant contingency decisions ($ x_{k(1)}^1, x_{k(1)k(2)}^2, \cdots,  x_{k(1)k(2) ... k(T-1)}^{T-1}$) constitute the entire chain of decisions if scenarios $ s_{k(1)}, s_{k(2)}^2, \cdots,  s_{k(T-1)}^{T-1}$ are unfolded, respectively. 

In contrast to the multi-stage stochastic programming, the proposed T-stage structure requires no knowledge about scenarios' probabilities. Indeed, in this structure, scenarios represent plausible futures that are different from the standard form of decision trees with known scenario probabilities. Therefore, we can handle problems under deep uncertainty.
 
The conventional optimisation methods under uncertainty usually identify the optimal solution for the average (stochastic), the most probable (probabilistic), or the worst-case scenario (hard robustness). The first two approaches easily fail if a different scenario is realised, and the latter is too costly and inefficient. 
The proposed structure, however, looks for the optimal combination of a robust, adaptable initial decision (which is not irrecoverable in the future) and $T-1$ subsequent contingency decisions for all considered scenario paths in the proposed structure. 
This multi-stage decision-making process can be summarized as follows: 

\begin{itemize}
    \item[] \texttt{Stage 0}: \textbf{Initial decision} is made before any scenario realisation (scenario-free decision).
    \item[] \texttt{Stage 1}: \textbf{The first contingency decision} to be taken if scenario \textit{k(1)} reveals (scenario dependent decision).
    \item[] \texttt{Stage 2}: \textbf{The second contingency decision} to be taken if scenario \textit{k(1)} has been unfolded at \texttt{stage 1} and scenario \textit{k(2)} manifests in \texttt{stage 2} (scenario dependent decision).
    \item[]  $\vdots$
    \item[] \texttt{Stage (T-1)}: \textbf{The $(T-1)^{th}$ contingency decision} to be taken if scenario \textit{k(1)} has been unfolded at \texttt{stage 1}, scenario \textit{k(2)} manifests in \texttt{stage 2}, ..., and  scenario \textit{k(T-1)} manifests in \texttt{stage (T-1)}---scenario dependent decision. 
\end{itemize}

\begin{defn}\label{df1}
\textit{Meta-decision}: 
In the proposed T-stage structure, a chain (or series) of decisions ($\mathbf{X}$), including the joint initial decision ($\mathbf{x}^0$) and T-1 consecutive decisions ($\mathbf{x}_{k(1)}^1, \mathbf{x}_{k(1)k(2)}^2), \cdots, \mathbf{x}_{k(1)k(2)..k(T-1)}^{T-1})$) in a scenario path ($ s_{k(1)}, s_{k(2)}^2, \cdots,  s_{k(T-1)}^{T-1}$), is called a \emph{``meta-decision"}. 
\end{defn}

\subsection{Extension of the concept of meta-criterion in the T-stage structure}\label{metaextend}
 Before formulating the model, the concept of meta-criterion in a T-stage process should be discussed. \citet{Stewart2013} introduced the term \textit{``meta-criterion"}\footnote{It can be called \emph{meta-objective} in the optimization context.} (criterion-scenario combination) as a dimension of preferences in scenario-based  \textit{multi-criteria decision analysis} (MCDA)---i.e., the performance of each alternative (decision) should be evaluated and compared regarding each criterion under the conditions of each scenario. 
In the proposed T-stage structure, each \emph{meta-decision} is related to a scenario path---a combination of scenarios in $T-1$ scenario realisation stages. Therefore, an evaluation must be done to determine how well a \emph{meta-decision} performs concerning each criterion in each scenario path---i.e., each \emph{meta-criterion} represents preferences regarding a criterion in a scenario path.

\begin{defn}\label{df2}
\textit{Meta-objective}: 
A meta-objective is defined as the combination of an objective and a scenario path in which the number of all meta-objectives equals $m \times p$, where $m$ and $p$ represent the total number of objectives and scenario paths, respectively.  
\end{defn}

Therefore, scenario paths, as a component of meta-objectives, represent a valid dimension of preference that can be utilised to avoid problems of assessing and using probabilities under deep uncertainty.
We use the extended concept of meta-objectives to formulate a mathematical optimisation model to find Pareto optimal meta-decisions and expand the concept of \emph{robustness} and \emph{dynamic-robust decision} in a multi-stage decision-making structure.

\subsection{Mathematical Formulation}\label{mathformula}

In the proposed T-stage decision-making structure, a meta-decision splits into $T$ adaptable decisions and defers $T$ stages of the decision long enough to unfold the uncertainty gradually (one contingency decision after each stage of scenario realisation). A dynamic-robust decision in this structure is a meta-decision which contains $T$ subgroups of decisions: an initial decision ($X_0$) followed by $T-1$ contingency/recourse decisions ($X_{k(1),\cdots,k(T-1)}^{T-1}$), in which the meta-decision is robust across scenario paths. %

According to definitions \ref{df1} and \ref{df2}, in the proposed multi-stage multi-scenario multi-objective decision-making structure, there are $ m \times p$ meta-objectives\footnote{Without loss of generality, to simplify models and readability in all models, we assume $p=p(1) \times p(2) \times \cdots \times p(T-1)$. } that must be optimised, while some uncertainties will unfold after the decision-making step. Therefore, the model includes $ m \times p$ objective functions in which the performance of meta-decisions are compared and evaluated subject to the satisfaction of some conditions that could be stage-oriented. 

For simplicity, we assume the same number of objective functions in each scenario at each stage. Then, the proposed multi-stage multi-scenario multi-objective optimisation problem can be formulated as follows:

\begin{center}
\begin{equation}\label{eq:ms2mop}
 \begin{array}{l l}
Opt_{(\mathbf{x}^0, \mathbf{x}_{k(1)}^1, \mathbf{x}_{k(1)k(2)}^2, ..., \mathbf{x}_{k(1)k(2)...k(T-1)}^{T-1})} \, \mathbf{F} & k(t)=1,...,p(t);\\
&   t=1,..., T-1;\\
 & \\
 s.t. $ $ $ $  u_r^0(\mathbf{x}^0) \leq 0, & r=1, ...,R_0; \\
 & \\
 $ $ $ $ $ $ $ $ $ $ $ $  u_r^1(\mathbf{x}^0, \mathbf{x}_{k(1)}^1) \leq 0, & k(1)=1,..., p(1); \\ 
& r=1,..., R_1;  \\
 & \\
 $ $ $ $ $ $ $ $ $ $ $ $  u_r^t(\mathbf{x}^0, \mathbf{x}_{k(1)}^1, ..., \mathbf{x}_{k(1)\times ...\times k(t)}^t) \leq 0, & k(1)=1,..., p(1); k(t)=1,..., p(t); \\ 
 &  r=1,...,R_t;  t=2,...,T-1; \\
 & 
\end{array}
\end{equation}
\end{center}
where 
$ F = (\mathbf{f}^0(\mathbf{x}^0); \mathbf{f}_{k(1)}^1(\mathbf{x}^0,\mathbf{x}_{k(1)}^1); ...;\mathbf{f}_{k(1)\times ...\times k(T-1)}^{T-1}(\mathbf{x}^0,\mathbf{x}_{k(1)}^1,...,x_{k(1)\times ...\times k(T-1)}^{T-1}))$, k(t)=1,...,p(t), t=1,..., T-1.

$ \mathbf{x}^0=(x_1^0,...,x_n^0) \in \mathbf{X}^0 $ is an \textit{n}-dimensional initial decision variable vector which made in \texttt{Stage 0} before scenario \textit{k(1)} is happening and $\mathbf{X}^0 $ is an initial decision space. 

$ \mathbf{x}_{k(1)}^1=(x_{1k(1)}^1,...,x_{nk(1)}^1) \in \mathbf{X}^1(\mathbf{x}^0,k(1)),  (k(1)=1,..., p(1)); $ is an (\textit{$n\times p(1)$})-dimensional recourse decision vector to be taken in \texttt{Stage 1} if scenario \textit{k(1)} unfolds and $\mathbf{X}^1(\mathbf{x}^0,k(1)) $ is a recourse decision space when scenario \textit{k(1)} manifests. 

$ \mathbf{x}_{k(1)...k(t)}^t=(x_{1k(1)...k(t)}^t,...,x_{nk(1)...k(t)}^t) \in \mathbf{X}^t(\mathbf{x}^0,k(1),...,k(t))$;
$k(1)=1,..., p(1)$, $k(t)=1, ..., p(t), t=2,...,T-1; $ is an (\textit{$n\times p(1)\times ...\times p(t)$})-dimensional (t=2,...,T-1) recourse decision vector to be taken in \texttt{Stage t} if scenarios \textit{k(1), ..., k(t); (t=2,...,T-1)} have been revealed at \texttt{Stage 1} to \texttt{Stage t} ($t=2,...,T-1$). Therefore, $\mathbf{X}^t(\mathbf{x}^0,k(1),...,k(t)); (t=2,...,T-1) $ is a recourse decision space when scenario \textit{k(1)} to \textit{k(t)} ($t=2,...,T-1$) manifest themselves, respectively. 

 $ $

$ f_i^0(\mathbf{x}^0), (i=1,...,m), $ is $ \mathit{i^{th}}$ objective function in \texttt{Stage 0}.

$ $

$ f_{ik(1)}^1(\mathbf{x}^0,\mathbf{x}_{k(1)}^1), (k(1)=1,...,p(1); i=1,..., m), $ is $ \mathit{i^{th}}$ objective function in \texttt{Stage 1} if scenario \textit{k(1)} happens.

$ $

$ f_{ik(1)...k(t)}^t(\mathbf{x}^0,\mathbf{x}_{k(1)}^1,..., \mathbf{x}_{k(1)...k(t)}^t), (k(1)=1,...,p(1);k(t)=1,...,p(t); t=2,...,T-1; i=1,..., m), $ is $ \mathit{i^{th}}$ objective function in \texttt{Stage t} if scenarios \textit{k(1)} to \textit{k(t)} reveals until this stage.

$ $

$ u_r^0(\mathbf{x}^0) $ is the set of inequality constraints in \texttt{Stage 0}.

$ $

$ u_r^1(\mathbf{x}^0, \mathbf{x}_{k(1)}^1) $ is the set of inequality constraints in \texttt{Stage 1}.

$ $

$ u_r^2(\mathbf{x}^0, \mathbf{x}_{k(1)}^1, ..., \mathbf{x}_{k(1)...k(t)}^t); (t=2,...,T-1); $ is the set of inequality constraints in \texttt{Stage t}.


The problem consists in optimising $[p(1)+(p(1)\times p(2))+...+(p(1)\times ...\times p(t))+1]\times m$ objectives under $ (R_0+(R_1\times p(1))+(R_2\times p(1)\times p(2))+...+ (R_{T-1}\times p(1)\times p(2) \times ...\times p(T-1)) $ constraints. \\


\subsubsection{Multi-stage multi-scenario multi-objective linear programming model}

Model \ref{eq:ms2mop} is a general formulation in which objective functions and constraints can be linear or non-linear, including any type of variables (continuous, integer, or mixed). In the case of linear objective functions and constraints, assuming all the objective functions are to be minimised (maximisation of \textbf{f(x)} is equivalent to minimising $\mathbf{-f(x)} $), the multi-stage multi-scenario multi-objective linear programming (MS2MOLP) model can be formulated as follow:

\begin{center}
\begin{equation}\label{eq:ms2molp}
 \begin{array}{l l}
Min \quad Z_{ik(1)...k(T-1)}(\mathbf{X}) & i=1,...,m; \\
 & k(t)=1,..., p(t); t=1,...,T-1;\\
 
 s.t. \quad  \sum_{j=1}^n a_{rj}^0 x_j^0 \leq b_r^0, & r=1,...,R_0; \\
 & \\

 \sum_{j=1}^n a_{rjk(1)}^0 x_j^0 + \sum_{j=1}^n a_{rjk(1)}^1 x_{jk(1)}^1 \leq b_{rk(1)}^1, & k(1)=1,...,p(1); \\ 
& r=1+...+R_1;  \\
 & \\

  \sum_{j=1}^n a_{rjk(1)..k(t)}^0 x_j^0 + \sum_{j=1}^n a_{rjk(1)..k(t)}^1 x_{jk(1)}^1 + ... + & k(1)=1,...,p(1); \\
& k(t)=1, ...,p(t); \\
 \sum_{j=1}^n a_{rjk(1)..k(t)}^t x_{jk(1)..k(t)}^t \leq b_{rk(1)..k(t)}^t, &  r=1,...,R_t; \\ 
&  t=2,...,T-1; \\
 & \\

& j=1,...,n; \\
 x_j^0, x_{jk(1)}^1, ..., x_{jk(1)..k(t)}^t \geq 0. &  k(1)=1,...,p(1); \\
& k(t)=1, ...,p(t);  \\
 & t=2, ..., T-1. \\
 &
\end{array}
\end{equation}
\end{center}
where, 
\begin{center}
\begin{equation}
 \begin{array}{l l}
Z_{ik(1)..k(T-1)}(\mathbf{X})= \sum_{j=1}^n c_{ij}^0 x_j^0 + (\sum_{j=1}^n c_{ijk(1)}^0 x_j^0 + \sum_{j=1}^n c_{ijk(1)}^1 x_{jk(1)}^1) + & \\
& \\
(\sum_{j=1}^n c_{ijk(1)k(2)}^0 x_j^0 + 
\sum_{j=1}^n c_{ijk(1)k(2)}^1 x_{jk(1)}^1 + \sum_{j=1}^n c_{ijk(1)k(2)}^2 x_{jk(1)k(2)}^2) + ... +  & \\
& \\
(\sum_{j=1}^n c_{ijk(1)k(2)..k(T-1)}^0 x_j^0 + \sum_{j=1}^n c_{ijk(1)k(2)..k(T-1)}^1 x_{jk(1)}^1 + ... + & \\
& \\
\sum_{j=1}^n c_{ijk(1)k(2)..k(T-1)}^{T-1} x_{jk(1)k(2)..k(T-1)}^{T-1}),
& \\
\end{array}
\end{equation}
\end{center}
and demonstrates the $ \mathit{i^{th}}$ linear objective function indicating under conditions pertaining to scenario path $k(1)k(2)..k(T-1)$.

$c_{ij}^0, c_{ijk(1)..k(\theta)}^t, i=1,...,m; j=1,...,n; t=0,..., T-1; \theta=1,...,T-1;$ are given coefficients evaluating the value of $i^{th}$ objective function in relevant scenario path. 

$a_{rj}^0,a_{rjk(1)k(2)..k(t)}^1, a_{rjk(1)k(2)..k(t)}^t, b_r^0,b_{rk(1)}^1, b_{rk(1)k(2)..k(T-1)}^t, r=1,..., R, t=2,..., T-1, j=1,...,n,$ are given coefficient in the constraints.

\subsubsection{Solving Multi-stage multi-objective programming by the reference point goal programming}\label{solve_mop}

As mentioned in Section \ref{back}, in this paper, we use the reference point goal programming (RGP) method \citep{Ogryczak1994} to solve the proposed multi-objective optimization model. If we denote $m\times p(1) \times ... \times p(T-1) $ goals (aspiration levels), to be provided by the decision-maker, by $g_{ik(1)... \times k(T-1)}$ for all meta-objectives, and the corresponding deviation variables by $\delta_{ik(1)..k(T-1)} $, associated constraints can be formulated as follows:

\begin{center}
\begin{equation}
 \begin{array}{l l}
z_{ik(1)..k(T-1)}-\delta_{ik(1)..k(T-1)} =\sum_{j=1}^n c_{ij}^0 x_j^0 + (\sum_{j=1}^n c_{ijk(1)}^0 x_j^0 + \sum_{j=1}^n c_{ijk(1)}^1 x_{jk(1)}^1) + ... + & \\
& \\
(\sum_{j=1}^n c_{ijk(1)..k(T-1)}^0 x_j^0 + \sum_{j=1}^n c_{ijk(1)..k(T-1)}^1 x_{jk(1)}^1 + ... + \sum_{j=1}^n c_{ijk(1)..k(T-1)}^{T-1} x_{jk(1)..k(T-1)}^{T-1}) - & \\ 
& \\
 \delta_{ik(1)..k(T-1)} = g_{ik(1)..k(T-1)}, \quad i=1,..,m, k(1)=1,...,p(1); k(t)=1,...,p(t); t=2, ..., T-1. & \\
& \\
\end{array}
\end{equation}
\end{center}

And the equivalent reference point goal programming model can be formulated as follows:

\begin{center}
\begin{equation}\label{eq:lrpg}
 \begin{array}{l l}
Min \quad \psi = Max_{i, k(1), ..., k(T-1)} \{ \omega_{ik(1)..k(T-1)} \delta_{ik(1)..k(T-1)} \} + & \\
& \\
\quad \quad \varepsilon \sum_{k(T-1)=1}^{p(T-1)} ... \sum_{k(1)=1}^{p(1)} \sum_{i=1}^m (\omega_{ik(1)..k(T-1)} \delta_{ik(1)..k(T-1)}) \\
 
 & i=1,..,m,  \\
 s.t.    \quad z_{ik(1)..k(T-1)} - \delta_{ik(1)..k(T-1)} = g_{ik(1)..k(T-1)}, &  k(t)=1,...,p(t);  \\
 & t=1, ...,T-1; \\
&  \\

 \sum_{j=1}^n a_{rj}^0 x_j^0 \leq b_r^0, & r=1,...,R_0; \\
 & \\
 
  \sum_{j=1}^n a_{rjk(1)}^0 x_j^0 + \sum_{j=1}^n a_{rjk(1)}^1 x_{jk(1)}^1 \leq b_{rk(1)}^1, & k(1)=1,...,p(1); \\ 
& r=1+...+R_1;  \\
 & \\

  \sum_{j=1}^n a_{rjk(1)..k(t)}^0 x_j^0 + \sum_{j=1}^n a_{rjk(1)..k(t)}^1 x_{jk(1)}^1 + & k(1)=1,...,p(1); \\
& k(t)=1, ..., p(t); \\
 ... + \sum_{j=1}^n a_{rjk(1)..k(t)}^t x_{jk(1)..k(t)}^t \leq b_{rk(1)..k(t)}^t, & r=1+...+R_t;  \\ 
&  t=2, ..., T-1;  \\

 &  \\
    x_j^0, x_{jk(1)}^1, x_{jk(1)..k(t)}^t  \geq 0, & j=1, ..., n; k(1)=1, ..., p(1); \\
    & k(t)=1, ..., p(t); t=2, ..., T-1; \\
 &  \\
 
  \delta_{ik(1)..k(t)} \quad \textit{free of sign}, & \forall i, k(t), t=1, ..., T-1, \\
 &
 \end{array}
\end{equation}
\end{center}
where $ \omega_{ik(1)..k(T-1)}\geq 0, (i=1, .., m; k(1)=1, ..., p(1); k(t)=1, ..., p(t); t=2, ..., T-1;);$ are the preferred importance weights of deviations setting by the decision-maker. $\varepsilon $ is an arbitrarily small positive number for the augmented term. 

And by setting $\phi = Max_{i, k(1), ..., k(T-1)} \{ \omega_{ik(1)..k(T-1)} \delta_{ik(1)..k(T-1)} \}$, the linear form can be reformulated as follows:

\begin{center}
\begin{equation}
 \begin{array}{l l}
Min \quad  \psi = \phi  + \varepsilon \sum_{k(T-1)=1}^{p(T-1)} ... \sum_{k(1)=1}^{p(1)} \sum_{i=1}^m (\omega_{ik(1)..k(T-1)} \delta_{ik(1)..k(T-1)})  \\
  & \\
 s.t.    \omega_{ik(1)..k(T-1)} \delta_{ik(1)..k(T-1)} - \phi \leq  0, &      i=1,..,m, k(t)=1,...,p(t);   \\
 & t=1,...,T-1; \\
 & \\
 z_{ik(1)..k(T-1)} - \delta_{ik(1)..k(T-1)} = g_{ik(1)..k(T-1)}, &      i=1,..,m, k(t)=1,...,p(t);   \\
 & t=1,...,T-1; \\
& \\
  \sum_{j=1}^n a_{rj}^0 x_j^0 \leq b_r^0, & r=1,...,R_0; \\
 & \\
   \sum_{j=1}^n a_{rjk(1)}^0 x_j^0 + \sum_{j=1}^n a_{rjk(1)}^1 x_{jk(1)}^1 \leq b_{rk(1)}^1, & k(1)=1,...,p(1); \\ 
& r=1+...+R_1;  \\
 & \\
 
 \sum_{j=1}^n a_{rjk(1)..k(t)}^0 x_j^0 + \sum_{j=1}^n a_{rjk(1)..k(t)}^1 x_{jk(1)}^1 + & k(1)=1,...,p(1); k(t)=1, ..., p(t); \\
& \\
 ... + \sum_{j=1}^n a_{rjk(1)..k(t)}^t x_{jk(1)..k(t)}^t \leq b_{rk(1)..k(t)}^t, & r=1+...+R_t; t=2, ..., T-1; \\ 
 & \\

   x_j^0, x_{jk(1)}^1, x_{jk(1)..k(t)}^t  \geq 0, & j=1, ..., n; k(1)=1, ..., p(1); \\
& k(t)=1, ..., p(t); t=2, ..., T-1; \\
 & \\
 
  \phi, \delta_{ik(1)..k(t)}  \quad \textit{free of sign,} & \forall i, k(t), t=1, ..., T-1;. \\
 &
 
 \end{array}
\end{equation}
\end{center}

The next section compares the two-stage and T-stage (when $T\geq3$) structures to highlight their similarity and differences and find out whether there is any merit in going beyond the two stages.

\subsection{Two-stage \textit{vs.} T-stage structure}\label{2sorTs}

Based on the definitions, the T-stage model includes the two-stage model and some more stages. Thus, the initial and the first contingency decisions of any feasible solution to the T-stage model must also be feasible for the two-stage model. 

\begin{theorem}
Any feasible solution to the T-stage model (\ref{eq:ms2mop}) corresponds to a feasible solution to the two-stage model.
\end{theorem}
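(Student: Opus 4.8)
The plan is to unpack what ``corresponds to a feasible solution'' must mean here and then verify feasibility directly by restriction. A feasible solution to the $T$-stage model (\ref{eq:ms2mop}) is a tuple $(\mathbf{x}^0, \mathbf{x}_{k(1)}^1, \mathbf{x}_{k(1)k(2)}^2, \ldots, \mathbf{x}_{k(1)\ldots k(T-1)}^{T-1})$, ranging over all $k(t)=1,\ldots,p(t)$, $t=1,\ldots,T-1$, that satisfies the stage-$0$ constraints $u_r^0(\mathbf{x}^0)\le 0$, the stage-$1$ constraints $u_r^1(\mathbf{x}^0,\mathbf{x}_{k(1)}^1)\le 0$, and all the higher-stage constraints $u_r^t(\cdot)\le 0$ for $t=2,\ldots,T-1$. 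The two-stage model of \citet{Shavazipour2019}, as recalled in the introduction, retains only \texttt{Stage 0} and \texttt{Stage 1}: an initial decision $\mathbf{x}^0$ together with one contingency decision $\mathbf{x}_{k(1)}^1$ per first-stage scenario $k(1)=1,\ldots,p(1)$, subject exactly to the constraint families $u_r^0$ and $u_r^1$.

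First I would define the candidate correspondence: given a $T$-stage feasible tuple, map it to the pair $(\mathbf{x}^0,\,(\mathbf{x}_{k(1)}^1)_{k(1)=1}^{p(1)})$ obtained by simply deleting all components of stage $2$ and beyond. Next I would check that this pair lies in the two-stage feasible set. This is immediate termwise: the stage-$0$ inequalities $u_r^0(\mathbf{x}^0)\le 0$ ($r=1,\ldots,R_0$) appear verbatim in both models and are satisfied by hypothesis; likewise the stage-$1$ inequalities $u_r^1(\mathbf{x}^0,\mathbf{x}_{k(1)}^1)\le 0$ ($k(1)=1,\ldots,p(1)$; $r=1,\ldots,R_1$) are among the $T$-stage constraints and depend only on the retained variables $\mathbf{x}^0$ and $\mathbf{x}_{k(1)}^1$, hence are inherited unchanged. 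The discarded constraints $u_r^t$ for $t\ge 2$ place no restriction on the pair we kept, so nothing can be violated by the deletion. One should also note that feasibility of the two-stage model carries no objective-value requirement, so there is nothing further to verify; if desired, one can additionally observe that the two-stage objective components $\mathbf{f}^0$ and $\mathbf{f}_{k(1)}^1$ coincide with the corresponding leading blocks of $\mathbf{F}$, so the restriction is consistent with the objective structure as well.

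Honestly, there is no genuine obstacle: the statement is a structural observation that the two-stage model is a ``projection'' (coordinate restriction) of the $T$-stage model onto its first two stages, and the proof is a one-line inheritance argument. The only thing requiring mild care is bookkeeping: being explicit that the map is the projection $\pi$ forgetting stages $2,\ldots,T-1$, and that $\pi$ sends the $T$-stage feasible region into — though in general not onto — the two-stage feasible region. If I wanted to be thorough I would also remark that the converse fails (a two-stage feasible pair need not extend to a $T$-stage feasible tuple, because the higher-stage constraints may be infeasible for a given $(\mathbf{x}^0,\mathbf{x}_{k(1)}^1)$), which is exactly what makes the subsequent comparison of the two structures nontrivial; but that is a remark, not part of proving the stated theorem.
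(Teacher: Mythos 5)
Your proposal is correct and follows essentially the same route as the paper's own proof: both observe that the stage-0 and stage-1 constraints of the T-stage model coincide with the two-stage constraints and involve only $\mathbf{x}^0$ and $\mathbf{x}_{k(1)}^1$, so projecting a feasible T-stage tuple onto its first two stages yields a feasible two-stage solution. Your write-up is merely more explicit about the projection map and the failure of the converse, which the paper defers to its corollary.
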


\begin{proof}
A mathematical proof can be simply concluded as the T-stage model includes all the constraints of the two-stage model together with some other restrictions. So, the feasible region of the two-stage model involves the image of the feasible region of the three-stage model in this dimension. Therefore, the relevant part of any feasible solution, and also the related part of the Pareto optimal solution, to the T-stage model can satisfy all constraints of the two-stage model and then corresponds to a feasible solution in the two-stage model (Suppose that $\mathbf{X}=(\mathbf{x}^0, \mathbf{x}^1, .., \mathbf{x}^{T-1})$ is a feasible solution for the T-stage model, then $\mathbf{X}^{\prime}=(\mathbf{x}^0, \mathbf{x}^1)$ will be feasible in the two-stage model). 
\end{proof}

\begin{corollary}
The converse is not always true; that is, the feasibility of a solution in the two-stage model does not guarantee the feasibility of the corresponding solution in the T-stage model.
\end{corollary}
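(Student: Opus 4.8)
The plan is to establish the corollary by producing an explicit counterexample, since the assertion is only that the converse of the preceding theorem fails in general; no structural argument is needed. Concretely, I would specialise the MS2MOLP model (\ref{eq:ms2molp}) to the smallest nontrivial instance: take $T=3$, a single scenario at each realisation stage ($p(1)=p(2)=1$), and a single decision variable at every stage, say $x^0,x^1,x^2\ge 0$. This keeps the bookkeeping trivial while retaining exactly the feature that separates the two-stage model from the $T$-stage one, namely the presence of the extra stage-2 constraints $u_r^2(\mathbf{x}^0,\mathbf{x}^1,\mathbf{x}^2)\le 0$ (equivalently, the rows of (\ref{eq:ms2molp}) indexed by $t=2$).

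Next I would choose the constraints so that the stage-0 and stage-1 restrictions admit a solution that the stage-2 restrictions then forbid. For instance impose $x^0\le 1$ at stage~0, $x^1\le 1$ at stage~1, and the coupling constraint $x^0+x^1+x^2\le 1$ at stage~2. The pair $(x^0,x^1)=(1,1)$ satisfies every constraint that is visible to the two-stage model, so it is a feasible two-stage solution. Yet any completion $(1,1,x^2)$ must satisfy $1+1+x^2\le 1$, i.e.\ $x^2\le -1$, which is incompatible with $x^2\ge 0$; hence no feasible three-stage solution restricts to $(1,1)$ on its first two stages, and a fortiori none does so in a $T$-stage model with $T\ge 3$, since the later stages can only append further constraints (if desired, one can take the constraints for stages $3,\dots,T-1$ to be vacuous). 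To be sure the example is not vacuous, I would also point out that the three-stage model is itself feasible — e.g.\ $(x^0,x^1,x^2)=(0,0,0)$ satisfies all constraints — so the phenomenon is a genuine failure of the converse rather than an artefact of an empty $T$-stage feasible region.

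I would then close by stating the general principle the example illustrates, so that the reader sees it is not an accident of the particular numbers: feasibility in the two-stage model only certifies that $\mathbf{x}^0\in\mathbf{X}^0$ and $\mathbf{x}_{k(1)}^1\in\mathbf{X}^1(\mathbf{x}^0,k(1))$, whereas feasibility in the $T$-stage model additionally requires that the recourse sets $\mathbf{X}^t(\mathbf{x}^0,k(1),\dots,k(t))$ for $t\ge 2$ be nonempty along the chosen scenario path; these sets depend on the earlier decisions through the coupling constraints and can be empty for some two-stage-feasible $(\mathbf{x}^0,\mathbf{x}^1)$, exactly as above.

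The only real subtlety — hardly an obstacle — is fixing precisely what ``the corresponding solution in the $T$-stage model'' means. The reading consistent with the theorem and its proof is the extension/projection relation on the two shared initial stages, so ``the converse is not always true'' should be interpreted as: there exists a $(\mathbf{x}^0,\mathbf{x}^1)$ feasible in the two-stage model that cannot be extended to any feasible $T$-stage meta-decision. Once this is made explicit, the counterexample settles the claim, and no estimates or limiting arguments are required.
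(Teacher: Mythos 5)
Your proposal is correct. The paper's own proof of this corollary is a one-line remark — ``the two-stage solution may not satisfy the additional constraints of the T-stage model'' — whereas you supply what a ``not always true'' claim strictly calls for: an explicit witness. Your instance ($x^0\le 1$, $x^1\le 1$, coupling constraint $x^0+x^1+x^2\le 1$ with $x^2\ge 0$, and the two-stage-feasible pair $(x^0,x^1)=(1,1)$ admitting no feasible completion) is valid within the format of model (\ref{eq:ms2mop})/(\ref{eq:ms2molp}), and your checks that the three-stage feasible set is nonempty and that later stages can be taken vacuous for $T>3$ close off the obvious objections. The underlying idea is the same as the paper's (the $T$-stage model appends constraints that a two-stage-feasible pair need not be extendable to satisfy); what your version buys is rigour and a clean statement of what ``corresponding solution'' means (the extension/projection relation on the shared initial stages), which the paper leaves implicit. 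It is worth noting that the paper does eventually produce a counterexample of exactly this flavour — Example \ref{exmp1} in Appendix \ref{app2}, with constraints $x^0\le 1$, $x^0-x^1\le 1$, $x^0+x^1+x^2\le \tfrac12$ and the non-extendable point $(x^0,x^1)=(1,0)$ — but deploys it for the two-stage moving-horizon discussion rather than for this corollary, so your construction is a legitimate and arguably tighter substitute for the paper's bare assertion here.
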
 

\begin{proof}
Obviously,  the two-stage solution may not satisfy the additional constraints of the T-stage model.
\end{proof}

The above theorem and corollary indicate the fact that although the solution to the two-stage model provides Pareto optimality under the uncertain conditions of the first period, it may not even be feasible and recoverable under additional uncertainties and circumstances of the later stages that would be appended in the T-stage structure ($T\geq3$). Moreover, with a more futuristic vision, the T-stage model may suggest a suboptimal initial decision that could make feasible robust meta-decisions for every plausible scenario path. This property automatically expands the concept of the dynamic-robust solutions for the T-stage framework that naturally existed in the proposed two-stage structure. Indeed, the T-stage model ($T\geq3$) provides a more robust initial solution than the two-stage model. Of course, it may not be Pareto optimal if only the first stage is considered, but the dynamic-robust meta-decisions will be both feasible and Pareto optimal across the T-stage model ($T\geq3$).

The two-stage structure always plans and looks one step ahead. In contrast, the T-stage structure looks further ahead and considers the conditions and consequences of T-1 steps in advance. Consequently, the T-stage model can consider longer-term consequences and find a more robust solution, if available. However, this robustness can be reached at the price of additional complexity and computation, making the efficiency of the T-stage model unclear. Note that, here, by the efficiency of the T-stage, we mean the added value of finding a more robust solution is more than the expenses of the extra complexity and computation. Since the number of scenario paths increases exponentially, considering many stages could result in a model that is too complex, computationally expensive and challenging to solve. 

To answer the above question, we compare the solutions generated by two- and T-stage models in this study. For a fair comparison, a similar time window must be considered, which is a fundamental difference between the two models. 
Thus, on the one hand, we compare the robustness of the initial solutions generated by the two models. On the other hand, we iterate the two-stage model, T-1 times, from the second stage to cover the whole time horizon of the T-stage model and be able to compare the solutions generated by the two models. Indeed, for the latter comparison, in the next section, we will introduce a \emph{``moving horizon structure''} where, first, the initial decision of the two-stage model will be implemented; then, after scenario realisation, another two-stage model will be run, which can cover the decisions of the second and third stages. The process will be continued until the entire T stages are covered.  

 In the rest of the paper, we focused on linear programming and a three-stage structure (T=3) to avoid too many complexities and gain better insights into the context without losing generality. All the concepts, models and theorems can be simply extended to more than three stages.

\subsection{Two-stage moving horizon approach}\label{2smoving}

Consider a decision-making problem with a three-stage planning horizon that includes three stages of decision-making and two steps of scenario realisation, as shown at the top of Figure \ref{fig:3sand22s}. Suppose that $\mathbf{x}^0$, $\mathbf{x}^1$, and $\mathbf{x}^2$ present the decision vectors related to each stage, respectively. $S(k_1)$ and $S(k_2)$ indicate the scenario spaces regarding the first and the second steps of scenario revelation, respectively. In a \textit{moving horizon} model, we always plan as if it is a two-stage horizon. This means that, at first, a two-stage model is used to generate the initial decisions ($\mathbf{x}^0$) that could be immediately implemented. Then, we need to wait and see which scenario from $S(k_1)$ will unfold. After that, another two-stage model starting at the second stage is applied to reach the contingent decisions ($\mathbf{x}^1$ and $\mathbf{x}^2$). That is, the second two-stage ($2 \times two-stage$). In other words, we roll the two-stage structure continuously.

Note that the contingency decisions ($\mathbf{x}^1$) generated by the first two-stage model are not implemented, and they may be used as an approximation that will substitute with the initial decisions gained from the new two-stage model applying after scenario realisation in the first stage. In other words, after the iteration, the contingency decisions (generated by the first two-stage model) change their role and become the initial decisions of the second two-stage model. This can help us to adapt the decision in the next stage if necessary. 

 Moreover, to be able to generate all the contingency decisions, for comparison purposes, one needs to run the second two-stage model for every plausible scenario in the $S(k_1)$ set (one for each), although in practice the second two-stage model will only run once for the actually realised scenario. Figure \ref{fig:3sand22s} demonstrates the structure of the two-stage moving horizon models and their generated solutions in comparison with the three-stage model. 
  
 $ $ \\

\begin{figure}[!htbp]

\tikzstyle{decision} = [diamond, draw, fill=blue!20, 
    text width=5.5em, text badly centered, node distance=3cm, inner sep=0pt]
\tikzstyle{block} = [rectangle, draw, fill=blue!20, 
    text width=3em, text centered, rounded corners, minimum height=2em]
\tikzstyle{line} = [draw, -latex']
\tikzstyle{cloud} = [draw, circle,fill=red!20, node distance=1.5cm,
    minimum height=1.5em]
\tikzstyle{emt} =[node distance=1cm]

\begin{center}
   \small 
\begin{tikzpicture}[node distance = 1cm, auto]
   
    \draw[gray, thick, dashed] (-1,3) -- (8,3);
   
    \node [block] (x0) at (0,2) {$X^0$};
    \draw[red,thick, dashed] (0,2) circle (10pt);
    \node [cloud] (sk1) at (1.5,2) {$S_{k(1)}$};    
    \node [block] (x1) at (3,2) {$X^1$};
    \draw[red,thick, dashed] (3,2) circle (10pt);
    
    \node [cloud] (sk2) at (4.5,2) {$S_{k(2)}$};
     \node [block] (x2) at (6,2) {$X^2$};
     \draw[red,thick, dashed] (6,2) circle (10pt);
    
    \path [line] (x0) --   (sk1);   
    \path [line] (sk1) --  (x1);
    
     \path [line] (x1) --   (sk2);   
    \path [line] (sk2) --  (x2);
    
    \draw[gray, thick, dashed] (-1,1) -- (8,1);

    \node [block] (x0) at (0,0) {$X^0$}; 
       \draw[red,thick, dashed] (0,0) circle (10pt);
    \node [cloud] (sk1) at (1.5,0) {$S_{k(1)}$};    
    \node [block] (x1) at (3,0) {$X^1$};
    
     \node [block] (x1n) at (3,-1) {$X^1$};
     \draw[red,thick, dashed] (3,-1) circle (10pt);
     \node [cloud] (sk2) at (4.5,-1) {$S_{k(2)}$};
     \node [block] (x2) at (6,-1) {$X^2$};
     \draw[red,thick, dashed] (6,-1) circle (10pt);
     
     \path [line] (x0) --   (sk1);   
     \path [line] (sk1) --  (x1);
    
     \path [line] (x1n) --   (sk2);   
    \path [line] (sk2) --  (x2);
    
    \draw[gray, thick, dashed] (-1,-2) -- (8,-2);
    
\end{tikzpicture}
\caption{Two-stage moving horizon structure compared to the three-stage structure}\label{fig:3sand22s} 
\end{center}
\end{figure}
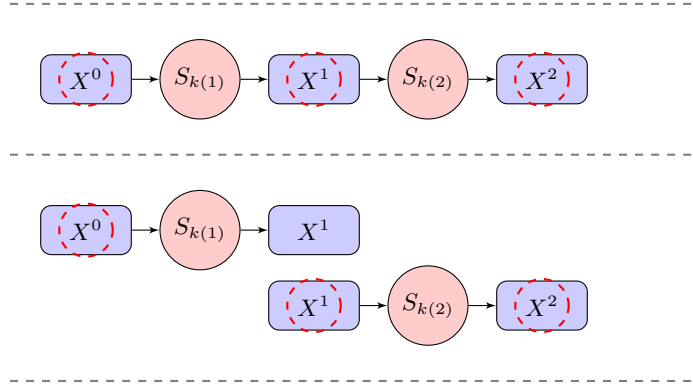

Therefore, if scenario $k(1)$ manifested itself at the first stage, the second two-stage model regarding this scenario can be formulated as follows:

\begin{center}
\begin{equation}\label{eq:22s}
 \begin{array}{l l}
Min \quad Z_{ik(1)}=  \sum_{j=1}^n c_{ijk(1)}^1 x_{jk(1)}^1 + \sum_{j=1}^n c_{ijk(1)k(2)}^2 x_{jk(1)k(2)}^2;  & i=1,...,m; k(2)=1,...,p(2);  \\
 & \\
 s.t.   \sum_{j=1}^n a_{rjk(1)}^1 x_{jk(1)}^1 \leq \beta_{rk(1)}^1, & r=1,...,R_{k(1)}; \\
 & \\
  \sum_{j=1}^n a_{rjk(1)k(2)}^1 x_{jk(1)}^1 + \sum_{j=1}^n a_{rjk(1)k(2)}^2 x_{jk(1)k(2)}^2\leq \beta_{rk(1)k(2)}^2, & k(2)=1, ...,p(2); \\ 
& r=R_{k(1)}+1,...,R_{k(1)}+...+R_{k(1).k(2)};   \\
 & \\
    x_{jk(1)}^1, x_{jk(1)k(2)}^2 \geq 0. & j=1,...,n; k(1)=1, ...,p(1), \\
 & k(2)=1, ...,p(2).\\
 &
\end{array}
\end{equation}
\end{center}
$ $\\
where \small $\beta_{rk(1)}^1=b_{rk(1)}^1 - \sum_{j=1}^n a_{rjk(1)}^0 x_j^{0*}$, $\beta_{rk(1)k(2)}^2=b_{rk(1)k(2)}^2 - \sum_{j=1}^n a_{rjk(1)k(2)}^0 x_j^{0*}$ and $x_j^{0*}, (j=1,..., n);$ is the most preferred Pareto optimal solution generated by the first two-stage model.

\begin{defn}{\textbf{(Feasible solutions in a two-stage moving horizon model).}}\label{def3}
\begin{small}
\textit{Vector} $\mathit{\mathbf{X}=(\underline{\mathbf{x}}^0, \overline{\mathbf{x}}_{k(1)}^1, \overline{\mathbf{x}}_{k(1)k(2)}^2)}$ \textit{is feasible for a two-stage moving horizon model \textbf{if} it simultaneously satisfies the constrains in both two-stage models
, and also} $\mathit{\forall k=k(1) \in S^1}$, \textit{and} $\mathit{\forall k(2)\in S^2, \exists \underline{\mathbf{X}}=(\underline{\mathbf{x}}^0, \underline{\mathbf{x}}_{k(1)}^1)}$\textit{, and} $\mathit{ \overline{\mathbf{X}}=(\overline{\mathbf{x}}_{k(1)}^1, \overline{\mathbf{x}}_{k(1)k(2)}^2), }$ \textit{s.t.} $\mathit{\underline{\mathbf{X}}}$ \textit{and} $\mathit{\overline{\mathbf{X}}}$ \textit{are feasible for the first and the second 
two-stage models, respectively.}
\end{small}
\end{defn}

\begin{theorem}\label{22sin3s}
Any feasible solution 
to the two-stage moving horizon model 
in a three-stage planning window is a feasible solution for the corresponding three-stage model 
and \textit{vice versa}---i.e., the two-stage moving horizon model is feasible \textbf{\emph{iff}} the three-stage model is feasible.
\end{theorem}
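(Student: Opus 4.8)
The plan is to reduce the claimed biconditional to a single algebraic identity: once the initial vector actually implemented in the first window is fixed, the constraint system of the two-stage moving horizon model is \emph{literally} the constraint system of the three-stage specialisation of~\eqref{eq:ms2molp} (the case $T=3$) with $\mathbf{x}^0$ substituted in. Concretely, I would first write out, side by side, (i) the three blocks of~\eqref{eq:ms2molp} for $T=3$ — the stage-$0$ block in $\mathbf{x}^0$ alone, the stage-$1$ block coupling $\mathbf{x}^0$ and $\mathbf{x}^1_{k(1)}$, and the stage-$2$ block coupling $\mathbf{x}^0,\mathbf{x}^1_{k(1)},\mathbf{x}^2_{k(1)k(2)}$, plus nonnegativity; (ii) the two blocks (stages $0$ and $1$, plus nonnegativity) of the \emph{first} two-stage model; and (iii) the two blocks of the \emph{second} two-stage model~\eqref{eq:22s}, namely the $\beta^1$- and $\beta^2$-constraints. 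The key observation is then immediate from the definitions $\beta^1_{rk(1)}=b^1_{rk(1)}-\sum_j a^0_{rjk(1)}x^{0*}_j$ and $\beta^2_{rk(1)k(2)}=b^2_{rk(1)k(2)}-\sum_j a^0_{rjk(1)k(2)}x^{0*}_j$: moving the $x^{0*}$-terms to the left-hand side turns the stage-$1$ and stage-$2$ constraints of~\eqref{eq:22s} into exactly the stage-$1$ and stage-$2$ constraints of~\eqref{eq:ms2molp} evaluated at $\mathbf{x}^0=\mathbf{x}^{0*}$. Moreover the stage-$0$ and stage-$1$ blocks of the first two-stage model coincide with the first two blocks of~\eqref{eq:ms2molp}. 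So the two constraint systems differ only by this substitution and by whether $\underline{\mathbf{x}}^1_{k(1)}$ or $\overline{\mathbf{x}}^1_{k(1)}$ is in play.

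For the forward direction, let $\mathbf{X}=(\underline{\mathbf{x}}^0,\overline{\mathbf{x}}^1_{k(1)},\overline{\mathbf{x}}^2_{k(1)k(2)})$ be feasible for the moving horizon model in the sense of Definition~\ref{def3}. Feasibility of $\underline{\mathbf{X}}=(\underline{\mathbf{x}}^0,\underline{\mathbf{x}}^1_{k(1)})$ for the first two-stage model supplies the stage-$0$ constraints of~\eqref{eq:ms2molp} for $\underline{\mathbf{x}}^0$; feasibility of $\overline{\mathbf{X}}=(\overline{\mathbf{x}}^1_{k(1)},\overline{\mathbf{x}}^2_{k(1)k(2)})$ for the second two-stage model, combined with $\mathbf{x}^{0*}=\underline{\mathbf{x}}^0$ and the identity above, supplies the stage-$1$ and stage-$2$ constraints of~\eqref{eq:ms2molp} for the full triple; nonnegativity is inherited. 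Hence $\mathbf{X}$ solves the three-stage model. For the converse, given $(\mathbf{x}^0,\mathbf{x}^1_{k(1)},\mathbf{x}^2_{k(1)k(2)})$ feasible for the three-stage model, set $\underline{\mathbf{x}}^0:=\mathbf{x}^0$, $\underline{\mathbf{x}}^1_{k(1)}:=\overline{\mathbf{x}}^1_{k(1)}:=\mathbf{x}^1_{k(1)}$, $\overline{\mathbf{x}}^2_{k(1)k(2)}:=\mathbf{x}^2_{k(1)k(2)}$, and read the $\beta$'s off this $\underline{\mathbf{x}}^0$. The stage-$0$ and stage-$1$ blocks of~\eqref{eq:ms2molp} then certify that $\underline{\mathbf{X}}$ is feasible for the first two-stage model, and the stage-$1$ and stage-$2$ blocks, rewritten through the identity with $\mathbf{x}^{0*}=\mathbf{x}^0$, certify that $\overline{\mathbf{X}}$ is feasible for the second two-stage model; together these verify every clause of Definition~\ref{def3}, so the triple is feasible for the moving horizon model. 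Taking the two directions together shows the set of triples feasible for the moving horizon model coincides with the feasible region of the three-stage model, whence one is nonempty if and only if the other is — the asserted ``feasible \emph{iff} feasible''.

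I do not expect a genuine analytic obstacle here; the argument is essentially bookkeeping around~\eqref{eq:ms2molp}, \eqref{eq:22s} and Definition~\ref{def3}. The only delicate points, which the write-up must handle carefully, are notational: one must insist that the $\beta$-terms in~\eqref{eq:22s} be interpreted with respect to the \emph{same} initial vector $\underline{\mathbf{x}}^0$ that appears in the candidate triple (not an extraneous ``most preferred Pareto optimal'' $\mathbf{x}^{0*}$), use the existential quantifiers $\exists\,\underline{\mathbf{X}},\overline{\mathbf{X}}$ of Definition~\ref{def3} solely to re-inject the stage-$0$ constraints and to supply the discarded first-window approximation $\underline{\mathbf{x}}^1_{k(1)}$, and keep the scenario indices $k(1),k(2)$ aligned across the ``for all scenario paths'' quantifiers so that the coupling constraints of the three-stage model are matched one-to-one with those of the two windows.
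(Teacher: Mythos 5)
Your argument is correct and follows essentially the same route as the paper's own proof: verify the stage-$0$ block from feasibility in the first two-stage model, recover the stage-$1$ and stage-$2$ blocks of the three-stage model by moving the $\beta$-correction terms (computed at $\mathbf{x}^{0*}=\underline{\mathbf{x}}^0$) back to the left-hand side, and for the converse split a three-stage feasible triple into the two windows exactly as Definition~\ref{def3} requires. Your explicit caveat that the $\beta$'s must be read with respect to the same initial vector appearing in the candidate triple is a point the paper handles only implicitly, but it does not constitute a different method.
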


\begin{proof}
The proof of this theorem can be found in Appendix \ref{proof22sin3s}.
\end{proof}

Theorem \ref{22sin3s} shows that any feasible solution to the two-stage moving horizon model, based on definition \ref{def3}, is feasible for the three-stage model and \emph{vice versa}. However, in the first two-stage model, we cannot consider the initial decisions' consequences and the scenarios' conditions in the third stage. Therefore, Pareto optimal initial decisions ($\mathbf{x}^0$) generated by the first two-stage model may give rise to a situation in which, in some scenario paths when running the second two-stage model, we cannot find any solutions to build a Pareto optimal or feasible meta-solution to the two-stage moving horizon model in combination with that initial decision generated by the first two-stage model. Therefore, some solutions generated by the two-stage moving horizon models might be infeasible or dominated by a solution generated by the three-stage model.

However, this issue will not occur if the three-stage model is applied, highlighting the robustness of the decisions in the proposed three-stage structure and confirming that the initial solutions provided by the proposed three-stage structure are more robust than the initial solutions of the two-stage model (for further illustration see example \ref{exmp1} describing the issue mentioned above in Appendix \ref{app2}).
However, extra computation and cognitive costs needed to gain these superiorities should be justified, particularly in complex, large-scale real-life problems. 

In the next section, we examine and compare the applicability of both the proposed T-stage (T=3) and moving horizon structures in a sequential portfolio problem. 

\section{An application to sequential portfolio selection under deep uncertainty.}\label{portfolio}

 Consider an investment portfolio with five stocks (or investment options) $I_1,...,I_5$, 
 which differ in terms of both risk and growth of funds. Furthermore, suppose that an amount of €$5$ $000$ $000$ is available and a minimum of €$250$ $000$ must be withdrawn at each stage under any circumstances. The maximum withdrawal is €$1$ $500$ $000$.

Assume that the economists identified five states $S_1,...,S_5$ (high inflation, stagflation, baseline inflation, deflation, and low inflation) in any one stage representing the set of plausible scenarios\footnote{Note that to simulate deep uncertainty, we shall not specify scenario's probabilities.}  which depend on the scenario in the previous stage, with the transition possibilities shown in Table~\ref{tab:trans}.

\begin{table}[h!]
\begin{center}
\begin{tabular}{cccccc} \toprule
 Previous stage & \multicolumn{5}{c}{State at next stage}\\ \cline{2-6}
 State & $S_1$ & $S_2$ & $S_3$ & $S_4$ & $S_5$ \\ \hline
 $S_1$ & \checkmark & \checkmark & & & \\
 $S_2$ & \checkmark & \checkmark & \checkmark & & \\
 $S_3$ & & \checkmark & \checkmark & \checkmark & \\
 $S_4$ & & & \checkmark & \checkmark & \checkmark \\
 $S_5$ & & & & \checkmark & \checkmark \\ \bottomrule 
\end{tabular}
\end{center}
\caption{Possible transition between different scenarios}\label{tab:trans}
\end{table}

 The anticipated growth in funds over each state has been portrayed in Table \ref{tab:grow}. For each stock, there is an opportunity to withdraw part of the investment as cash in each stage, which could either be spent for current costs or be transferred to another stock for re-investment. However, some percentages of the total fund will be lost upon transfer between stocks, and this loss could be state-related, as shown in Tables \ref{tab:penS1}-\ref{tab:penS5}. Otherwise, the money remains until the next investment maturity at the next stage. No switching of funds between stocks is ever possible between stages.  

\begin{table}[!htbp]
\begin{center}
\begin{tabular}{cccccc} \toprule
  & \multicolumn{5}{c}{Growth under scenario}\\ \cline{2-6}
 Investment & $S_1$ & $S_2$     & $S_3$       & $S_4$      & $S_5$ \\ \hline
 $I_1$ & $ -20\% $ & $ +4\% $   & $ +16\% $   & $ +20\% $  & $ +50\% $ \\ \hline
 $I_2$ & $ -2\% $  & $  +8\% $  & $ +11.5\% $ & $ +20\% $  & $ +30\% $ \\ \hline
 $I_3$ & $ +8\% $  & $ +8.5\% $ & $ +9\% $    & $ +9.5\% $ & $ +10\% $ \\ \hline
 $I_4$ & $  +4\% $ & $ +7\% $   & $ +12\% $   & $ +16\% $  & $ +20\% $ \\ \hline
 $I_5$ & $ -15\% $ & $ +6\% $   & $ +15\% $   & $ +20\% $  & $ +35\% $  \\ \bottomrule
\end{tabular}
\end{center}
\caption{Percentage growths for each stock under each scenario}\label{tab:grow}
\end{table}

\begin{table}[!h]
\thisfloatpagestyle{empty}
\begin{center}
\begin{tabular}{|cccccc|c|} \toprule
 Transfer & \multicolumn{5}{c|}{to} & \\ \cline{2-6}
 from  & $I_1$       & $I_2$       & $I_3$      & $I_4$       & $I_5$       & Withdrawal \\ \hline
 $I_1$ & $0$         & $ -2.5\% $  & $ -3\% $   & $ -3\% $    & $ -2\%$     & $ -3\% $ \\ \hline
 $I_2$ & $ -0.05\%$  & $0$         & $ -1\% $   & $ -0.1\% $  & $ -0.1\% $  & $ -0.3\%$ \\ \hline
 $I_3$ & $ -0.01\%$  & $ -0.1\% $  & $0$        & $ -0.01\% $ & $ -0.01\% $ & $ -0.1\%$ \\ \hline
 $I_4$ & $ -0.01\%$  & $ -0.01\% $ & $ -0.8\% $ & $0$         & $ -0.01\% $ & $ -0.2\%$ \\ \hline
 $I_5$ & $ -0.1\%$   & $ -2.5\% $  & $ -3\% $   & $ -3\% $    & $0$         & $ -2.5\%$\\ \bottomrule
\end{tabular}
\caption{Percentage of loss of funds (penalty cost) for transferring between each pair of stock and withdrawal in state $S_1$}\label{tab:penS1}

\begin{tabular}{|cccccc|c|} \toprule
 Transfer & \multicolumn{5}{c|}{to} & \\ \cline{2-6}
 from & $I_1$      & $I_2$      & $I_3$      & $I_4$       & $I_5$      & Withdrawal \\ \hline
 $I_1$  & $0$      & $ -1\% $   & $ -1.2\% $ & $ -1.0\% $  & $ -0.7\%$  & $ -2\% $ \\ \hline
 $I_2$ & $ -0.5\%$ & $0$        & $ -1.0\% $ & $ -0.5\% $  & $ -0.3\% $ & $ -0.4\%$ \\ \hline
 $I_3$ & $ -0.7\%$ & $ -0.2\% $ & $0$        & $ -0.01\% $ & $ -0.2\% $ & $ -0.3\%$ \\ \hline
 $I_4$ & $ -0.5\%$ & $ -1\% $   & $ -1.5\% $ & $0$         & $ -0.1\% $ & $ -0.4\%$ \\ \hline
 $I_5$ & $ -0.2\%$ & $ -1\% $   & $ -1.5\% $ & $ -0.1\% $  & $0$        & $ -1.5\%$\\ \bottomrule
\end{tabular}
\caption{Percentage of loss of funds (penalty cost) for transferring between each pair of stock and withdrawal in state $S_2$}\label{tab:penS2}

\begin{tabular}{|cccccc|c|} \toprule
 Transfer & \multicolumn{5}{c|}{to} & \\ \cline{2-6}
 from  & $I_1$     & $I_2$      & $I_3$      & $I_4$       & $I_5$      & Withdrawal \\ \hline
 $I_1$ & $0$       & $ -0.4\% $ & $ -0.5\% $ & $ -0.3\% $  & $ -1.0\%$  & $ -1.0\% $ \\ \hline
 $I_2$ & $ -1.1\%$ & $0$        & $ -0.2\% $ & $ -0.01\% $ & $ -1.1\% $ & $ -1.2\%$ \\ \hline
 $I_3$ & $ -1.2\%$ & $ -1\% $   & $0$        & $ -0.3\% $  & $ -1.0\% $ & $ -2\%$ \\ \hline
 $I_4$ & $ -1.1\%$ & $ -1.5\% $ & $ -0.7\% $ & $0$         & $ -1.0\% $ & $ -2\%$ \\ \hline
 $I_5$ & $ -0.8\%$ & $ -0.3\% $ & $ -0.3\% $ & $ -0.2\% $  & $0$        & $ -0.8\%$\\ \bottomrule
\end{tabular}
\caption{Percentage of loss of funds (penalty cost) for transferring between each pair of stock and withdrawal in state $S_3$}\label{tab:penS3}

\begin{tabular}{|cccccc|c|} \toprule
 Transfer & \multicolumn{5}{c|}{to} & \\ \cline{2-6}
 from  & $I_1$    & $I_2$       & $I_3$       & $I_4$       & $I_5$     & Withdrawal \\ \hline
 $I_1$ & $0$      & $ -0.01\% $ & $ -0.01\% $ & $ -0.01\% $ & $ -0.5\%$ & $ -0.1\% $ \\ \hline
 $I_2$ & $ -2\%$  & $0$         & $ -0.1\% $  & $ -0.1\% $  & $ -2\% $  & $ -1.5\%$ \\ \hline
 $I_3$ & $ -3\%$  & $ -2.5\% $  & $0$         & $ -0.7\% $  & $ -3\% $  & $ -2.5\%$ \\ \hline
 $I_4$ & $ -3\%$  & $ -2\% $    & $ -0.1\% $  & $0$         & $ -3\% $  & $ -2.5\%$ \\ \hline
 $I_5$ & $ -5\%$ & $ -0.01\% $ & $ -0.01\% $ & $ -0.01\% $ & $0$       & $ -0.1\%$\\ \bottomrule
\end{tabular}
\caption{Percentage of loss of funds (penalty cost) for transferring between each pair of stock and withdrawal in state $S_4$}\label{tab:penS4}

\begin{tabular}{|cccccc|c|} \toprule
 Transfer & \multicolumn{5}{c|}{to} & \\ \cline{2-6}
 from  & $I_1$      & $I_2$       & $I_3$       & $I_4$       & $I_5$      & Withdrawal \\ \hline
 $I_1$ & $0$        & $ -0.01\% $ & $ -0.01\% $ & $ -0.01\% $ & $ -1.5\%$  & $ -0.2\% $ \\ \hline
 $I_2$ & $ -1.5\%$  & $0$         & $ -0.05\% $ & $ -0.1\% $  & $ -2.5\% $ & $ -1.5\%$ \\ \hline
 $I_3$ & $ -3\%$    & $ -2.5\% $  & $0$         & $ -1\% $    & $ -3\% $   & $ -2.5\%$ \\ \hline
 $I_4$ & $ -2.5\%$  & $ -2\% $    & $ -0.1\% $  & $0$         & $ -3\% $   & $ -2.5\%$ \\ \hline
 $I_5$ & $ -0.01\%$ & $ -0.01\% $ & $ -0.01\% $ & $ -0.01\% $ & $0$        & $ -0.1\%$\\ \bottomrule
\end{tabular}
\caption{Percentage of loss of funds (penalty cost) for transferring between each pair of stock and withdrawal in state $S_5$}\label{tab:penS5}
\end{center}
\end{table}

Moreover, two objective functions ($ f_n, n=1,2$) have been considered as follows:
\begin{enumerate}
\item[$ \mathbf{f_1}$:] Maximising the desirable level of total funds available after withdrawals.
\item[$ \mathbf{f_2}$:] Maximising the cash withdrawals at each stage (i.e., at the end of the stage) between an absolute minimum and a desirable maximum, with goals that may be state-dependent.
\end{enumerate}

\begin{figure}[!ht]
  
  \centering
  \includegraphics[width=0.65 \textwidth]{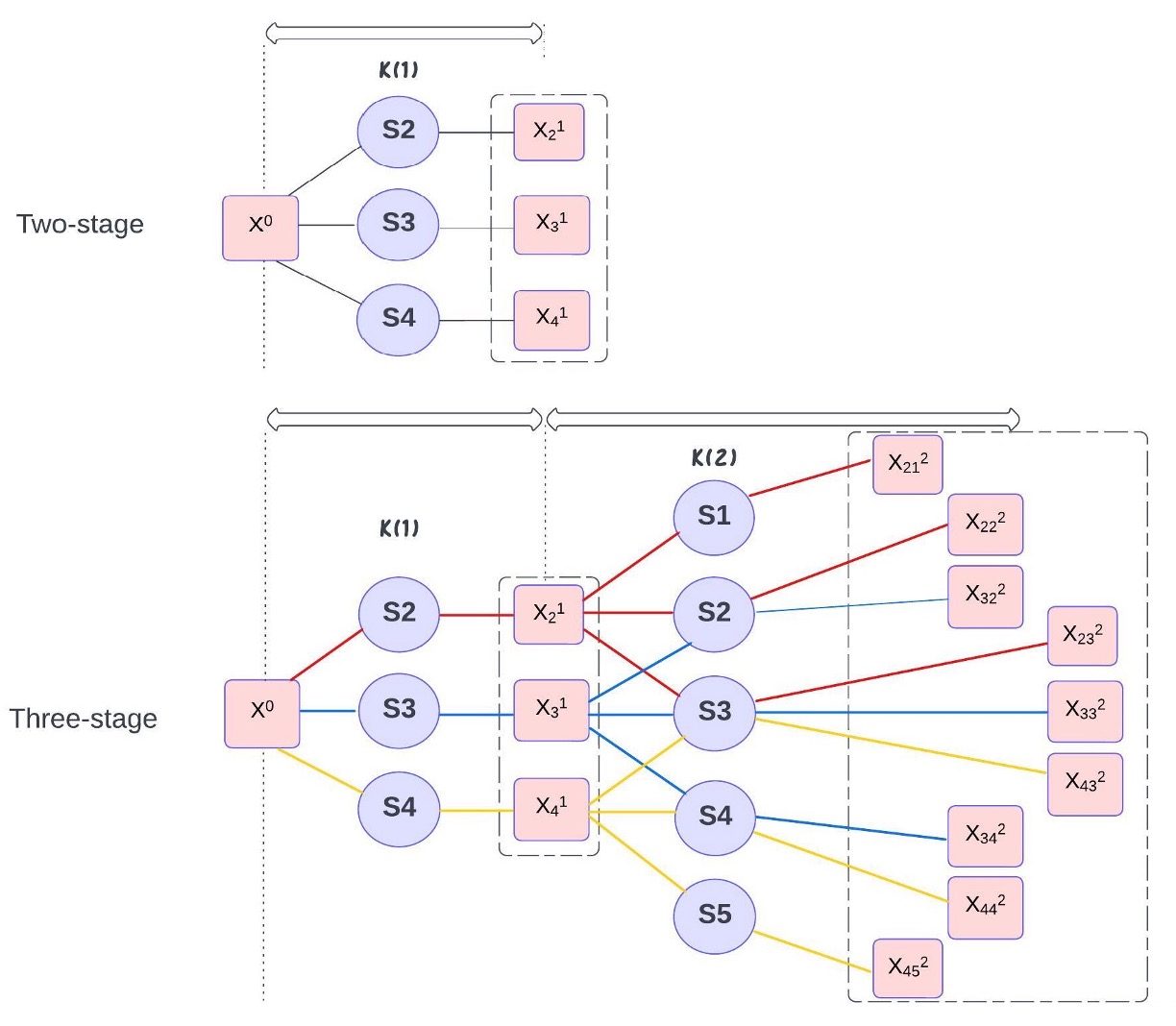}

\caption{Scenarios of the two-stage structure in comparison with meta-scenarios of the three-stage structure}
\label{fig:e3scenarios}
\end{figure}

 Suppose that, currently, we are facing the economic situation of baseline inflation (i.e., state $S_3$). Then, in the two-stage structure, scenarios are collected from $K_1=\{S_2, S_3, S_4\}$. However, in the three-stage structure, scenario paths can be set as follows:  $k(1) \in K_1=\{S_2, S_3, S_4\}$ and $k(2)\in K_2$, but $K_2$ would be dependent on $K_1$. If $K_1=S_2$ then $K_2=\{S_1, S_2, S_3\}$ and for $K_1=S_3$ and $K_1=S_4$, $K_2$ would be respectively equal to $\{S_2, S_3, S_4\}$ and $\{S_3, S_4, S_5\}$. Figure \ref{fig:e3scenarios} describes and compares scenarios of the two-stage structure and scenario paths of the three-stage structure as well as their stages and solutions.

Accordingly, there are three scenarios in a two-stage structure that could establish six meta-objectives in combination with the two objective functions. In a three-stage structure, each of these three scenarios would be followed by a different group of three plausible scenarios that will construct nine scenario paths in which the objective values must be evaluated and compared. In other words, alternative decisions must be evaluated in terms of all eighteen (six, in the two-stage structure) meta-objectives which will give us nine (three) meta-decisions. Although finding the best group of nine (three) meta-decisions that simultaneously satisfy all eighteen (six) meta-objectives is, generally, impossible, 
a Pareto optimal decision would be reached by solving the three-stage (two-stage) model. Therefore, considering the notations in Appendix \ref{tab:var}, the three-stage multi-objective optimisation model for this problem can be formulated as follows :


\begin{center}
\begin{equation}\label{eq:e33s}
 \begin{array}{l l}
Max \quad Z_{1k(1)k(2)}= \sum_{j=1}^5 \sum_{i=1}^5 (x_{ij}^0 + x_{ijk(1)}^1)  + \sum_{i=1}^5 x_{i7k(1)k(2)}^2 & \forall k(1), k(2)\\
&\\
Max \quad Z_{2k(1)k(2)}= \sum_{i=1}^5 (x_{i6}^0 +  x_{i6k(1)}^1 +  x_{i6k(1)k(2)}^2) & \forall k(1), k(2)\\
& \\

 s.t. &\\ 
 
& \\
 
(\textit{Funds  balance constraints}) &\\
&\\
   \sum_{j=1}^6 (1+p_{ij})x_{ij}^0 = b_i^0, & i=1,...,5; \\
 & \\
  \sum_{j=1}^6 (1+p_{ij}) x_{ijk(1)}^1 = \sum_{j=1}^5 (1+c_{ik(1)}) x_{ji}^0   & \forall i, k(1);\\
 & \\
 
    (1+p_{i6}) x_{i6k(1)k(2)}^2 + x_{i7k(1)k(2)}^2 =\sum_{i=1}^5 (1+c_{jk(1)k(2)}) x_{jik(1)}^1   & \forall i, k(1), k(2); \\ 
 & \\

 (\textit{Minimum  withdrawal constraints})&\\
 &\\
 \sum_{i=1}^5  x_{i6}^0 \geq b_{6}^0, &  \\ 
 &\\
  \sum_{i=1}^5  x_{i6k(1)}^1 \geq b_{6k(1)}^1, & \forall k(1); \\ 
&\\
 \sum_{i=1}^5  x_{i6k(1)k(2)}^0 \geq b_{6k(1)k(2)}^2, & \forall k(1), k(2); \\ 
 & \\

 (\textit{Non-negativity  constraints})&\\
 &\\
   x_{ij}^0, x_{ijk(1)}^1, x_{i6k(1)k(2)}^2, x_{i7k(1)k(2)}^2 \geq 0. & i=1,...,5; j=1,...,6; \\
& \forall k(1), k(2). \\
 & 
\end{array}
\end{equation}
\end{center}


$ $

By eliminating the third stage and its relevant states and variables from the three-stage model, the two-stage multi-objective optimisation model for this problem can be formulated (see model \ref{eq:e32s} in Appendix \ref{2s_portfolio}). Parameters and decision variables that are utilised to formulate the two- and three-stage models are listed in Appendix \ref{tab:var}, as well as the relevant GP model for both (see models \ref{eq:e32sGP} and \ref{eq:e33sGP} in Appendix \ref{3sRGP} and \ref{2sRGP}). 

 Clearly, any variation of states in the third stage will not affect the first two-stage initial decision. For example, the first two-stage model has no idea about states $S_1$ and $S_5$ and the initial decision on this model, in some cases, may not work well enough in the future if one of those states unfolded. Such a variation may influence the initial decision of the three-stage model. As propounded in \ref{2sorTs}, future decisions might be affected by current decisions in a multi-stage problem. Therefore, considering different plausible futures and consequences of current decisions on each future that the three-stage structure could provide is helpful in such a problem---i.e., it generates a more robust initial solution than the two-stage structure.

\subsection{Numerical comparison of solutions generated by the three-stage and two-stage moving horizon models}

Considering the goals (aspiration levels) 
represented in Table \ref{tab:goals}, provided by the decision-maker, we solved the three-stage model (model \ref{eq:e33s}) and relevant models for the corresponding two-stage moving horizon structure via the reference point goal programming (RGP) method \citep{Ogryczak1994}, described in \ref{GP} and \ref{solve_mop} (corresponding three- and two-stage models can be found in Appendix \ref{3sRGP} and \ref{2sRGP}, respectively).

 \begin{table}[!htbp]
\begin{center}
\begin{tabular}{ccccccc} \toprule
 & \multirow{2}{*}{Goals} & \multicolumn{5}{c}{Plausible states (scenarios)}\\ \cline{3-7}
 &        & $S_1$   & $S_2$      & $S_3$    & $S_4$   & $S_5$ \\ \hline
 \multirow{2}{*}{\texttt{stage 0}} & $g_1^0$ & $ -  $  & $  - $  & $ 5.5 $  & $ - $   & $ - $ \\ \cline{2-7}
                            & $g_2^0$ & $ - $ & $ - $    & $ 0.75 $ & $   - $ & $ - $ \\ \hline
                            
 \multirow{2}{*}{\texttt{stage 1}} & $g_1^1$ & $ -  $  & $  6.5 $   & $ 7 $  & $ 7.5 $   & $ - $ \\ \cline{2-7}
                            & $g_2^1$ & $ - $ & $ 0.5 $    & $ 0.75 $ & $   1 $ & $ - $ \\ \hline
                            
 \multirow{2}{*}{\texttt{stage 2}} & $g_1^2$ & $ 7  $  & $  7.5 $   & $ 8 $  & $ 9 $   & $ 11 $ \\ \cline{2-7}
                            & $g_2^2$ & $ 0.5 $ & $ 0.5 $    & $ 0.75 $ & $ 1 $ & $ 1.5 $ \\ \bottomrule
                            
 \end{tabular}
\end{center}
\caption{Desirable levels of total remained funds (million Dollars) after consumption in each state}\label{tab:goals}
\end{table}

In the two-stage moving horizon model (shown in figure \ref{fig:e3scenarios1}), 
the initial decision of the first two-stage model is implemented. Then, after scenario realisation, another two-stage model will be run, which can cover the decisions of the second and third stages. To evaluate and compare the solutions of these two following two-stage models with the three-stage model's solutions, we need to run the second two-stage model three times, one for each plausible scenario, to produce nine meta-decisions for nine scenario paths. 

Indeed, after identifying the initial decision from the first two-stage model, we will suppose that one scenario unfolded in the first scenario realisation section; say, e.g., $S_{k(1)}; k(1)=\{S_2, S_3, S_4\}$. Then, the consequences of the initial decision under conditions of state $S_{k(1)}; k(1)=\{S_2, S_3, S_4\}$ are simulated, and the second two-stage model with an updated amount of fund in each stock will be run. 
The initial decisions of these three new two-stage models will be considered as the first set of scenario-relevant contingent decisions ($X_2^1$, $X_3^1$, and $X_4^1$), while the contingent decisions generated by the second two-stage models will represent the scenario-relevant second contingent decisions ($X_{21}^2, ..., X_{45}^2$).

\begin{figure}[!t]
  
  \centering
  \includegraphics[width=0.65 \textwidth]{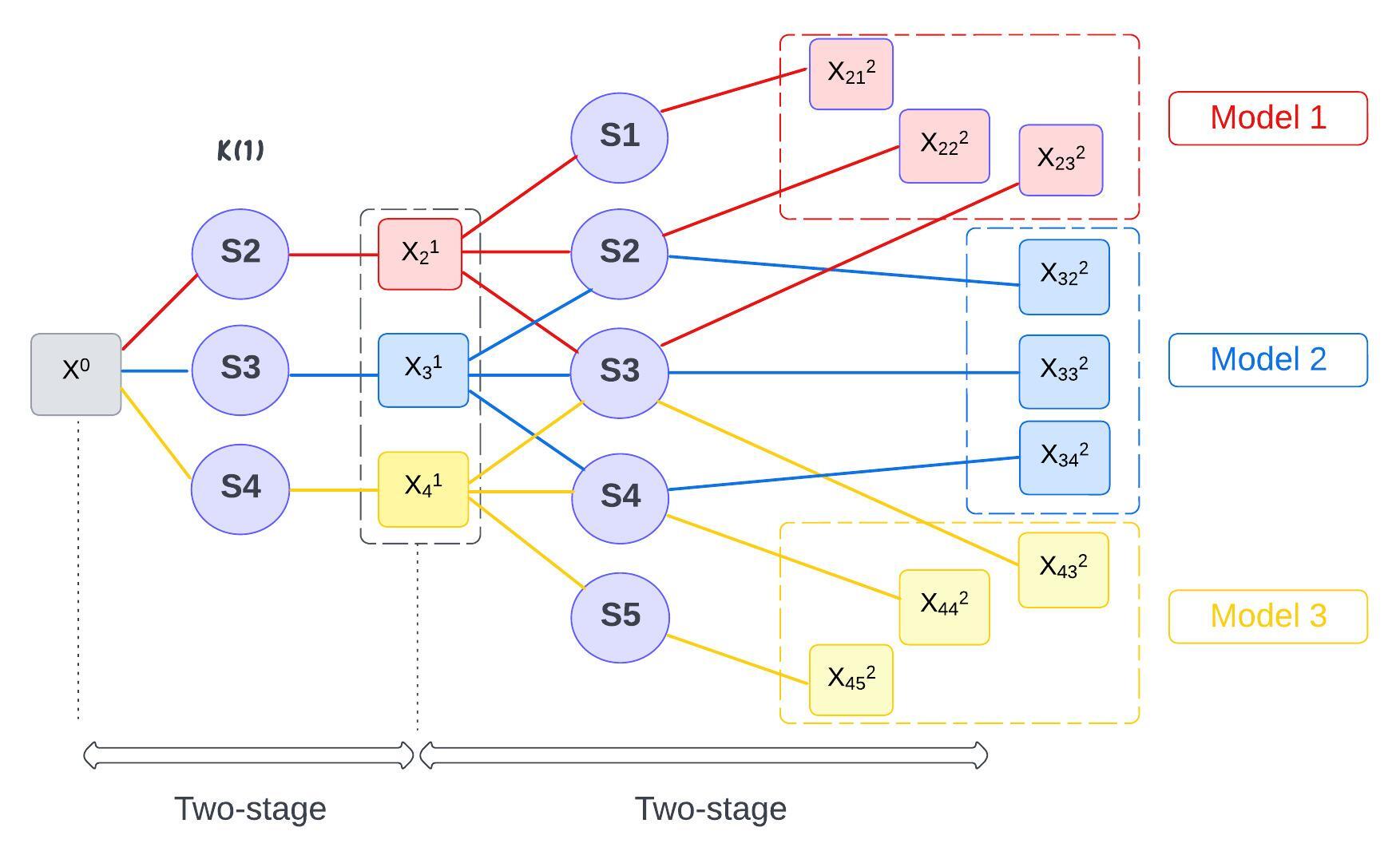}

\caption{Scenarios of the 2 $\times$ two-stage structure}
\label{fig:e3scenarios1}
\end{figure} 

Accordingly, as shown in figure \ref{fig:e3scenarios1}, if $S_2$ is revealed, then solution $X_2^1$ will be implemented, and the set of plausible scenarios in the next period starting at this stage will be $\{S_1, S_2, S_3\}$.  $\{S_2, S_3, S_4\}$ determines the set of plausible scenarios in the next stage if $S_3$ unfolds and $X_3^1$ is executed. Also, in the case of $S_4$ realisation, $X_4^1$ will be performed, and the plausible scenarios can be presented by $\{S_3, S_4, S_5\}$. Thus, by solving these three two-stage models 
 comparable solutions with the three-stage model will be generated. 

Table \ref{tab:3s} compares Pareto optimal solutions generated by relevant three-stage and two-stage moving horizon models utilising the reference-point goal programming approach and the given preferences.
As shown in this table (Table \ref{tab:3s}), the second column indicates the suggested investment amount for each stock at the beginning of the horizon relating to the initial decision ($X^0$) based on the given preferences.  
The first contingency action, which will be implemented at the end of the second stage, has been described in the third column.  
Available capital at the end of the related time horizon in each structure has been portrayed in the fourth column. 

Finally, the last four columns include the other contingency actions (i.e., withdrawal) at each stage, together with the total withdrawal at the end of the investment window. The withdrawal in the first, second, and third stages are demonstrated by $X_6^0$, $X_6^1$, and $X_6^2$, respectively.

\begin{landscape}   
\begin{table}[!htbp]
\begin{center}
\resizebox{0.99\textwidth}{!}{

\begin{tabular}{|c|c|c|c|c|c|c|c|} \hline
\multirow{2}{*}{Structure} & Initial decision & First contingency &  Total worth of investment & \multicolumn{4}{|c|}{Withdrawal} \\
 & $X^0=(I_1, I_2, I_3, I_4, I_5)$ & $X_{k(1)}^1=(I_1, I_2, I_3, I_4, I_5)$ & at the end of the third stage & $X_6^0$ & $X_6^1$ & $X_6^2$ & Total \\ \hline
 
\multirow{11}{*}{3-stage} & \multirow{11}{*}{(1 000 000, 1 000 000, 0, 0, 2 728 200)} & \multirow{3}{*}{(35 800, 120'200, 4 241 100, 277 700, 48 200)} & 3 697 200 & \multirow{9}{*}{250 000} & \multirow{3}{*}{250 020} & 1 357 300 & 1 857 300 \\ 
 & & & 4 127 600 & & & 985 500 & 1 485 600 \\ 
 & & & 4 791 900 & & & 366 600 & 866 700 \\
 & & & & & & & \\
 & & \multirow{3}{*}{(1 205 900, 953 300, 93 300, 242 400, 2 650 000)} & 4 521 000 & & \multirow{3}{*}{250 150} & 924 700 & 1 424 800 \\
 & & & 4 944 700 & & & 927 900 & 1 428 000 \\
 & & & 5 352 800 & & & 797 800 & 1 297 900 \\
  & & & & & & & \\
 & & \multirow{3}{*}{(5 172 200, 0, 0, 0, 0)} & 5 123 500 & & \multirow{3}{*}{500 000} & 867 600 & 1 617 600 \\
 & & & 5 583 700 & & & 622 300 & 1 372 300 \\
 & & & 7 507 800 & & & 250 000 & 1 000 000 \\ \hline \hline
 
\multirow{11}{*}{2 $\times$ 2-stage} & \multirow{11}{*}{(1 000 000, 1 584 010, 0, 0, 2 144 190)} & \multirow{3}{*}{(1 040 000, 0, 0, 2 230 400, 1 495 600)} & 4 172 400 & \multirow{9}{*}{250 000} & \multirow{3}{*}{250 000} & 250 000 & 750 000 \\ 
 & & & 4 759 800 & & & 289 930 & 789 930 \\ 
 & & & 5 172 400 & & & 250 000 & 750 000 \\
 & & & & & & & \\
 & & \multirow{3}{*}{(1 160 000, 1 766 200, 0, 0, 2 213 800)} & 4 668 200 & & \multirow{3}{*}{250 000} & 782 250 & 1 282 200 \\
 & & & 5 092 100 & & & 761 110 & 1 312 180 \\
 & & & 5 667 500 & & & 500 000 & 1 000 000 \\
  & & & & & & & \\
 & & \multirow{3}{*}{(5 037 300, 0, 0, 0, 0)} & 4 653 700 & & \multirow{3}{*}{500 000} & 1 177 800 & 1 927 800 \\
 & & & 5 366 900 & & & 677 300 & 1 427 300 \\
 & & & 7 305 600 & & & 250 000 & 1 000 000 \\ \hline \hline

\end{tabular}
}
\end{center}
\caption{Result of three-stage and two-stage moving horizon models.}\label{tab:3s}
\end{table}
\end{landscape}

For better comparison and tracking of the trade-offs between objective functions in different scenarios, we visualised the objective functions' values of the solutions generated by these two approaches by the scenario-based empirical achievement functions and scenario-based heatmaps \citep{Shavazipour2021b}, particularly proposed for multi-scenario multi-objective decision-making problems.

Using so-called \emph{all-in-one SB-EAFs}, Figure \ref{fig:sbeaf} compares the two objective function values in all nine scenario paths. The red broken line (\textcolor{red}{\scriptsize{\sout{$\circ$}}}) demonstrates the objective function values of the solution generated by the three-stage model, while the blue one (\textcolor{blue}{\tiny{\sout{$\square$}}}) represents the objective function values of the solution generated by the two-stage moving horizon models. 
$s_1, s_2, \dots, s_9$, are corresponds to the scenario paths. The colour code (shown on the right-hand side of each plot) describes the number of scenarios in which that region is achievable by at least one solution.
For example, the yellow area (\cbox{eafbest}) demonstrates the values can be attained only by one solution in one scenario called the \emph{best possible attainment surface}, e.g., in the best case, the highest amount of money can be reached if the solution generated by the 3-stage model (\textcolor{red}{\tiny{\sout{$\circ$}}}) is chosen and scenario $S_9$ unfolded. On the other hand, the highest values for the second objective function (withdrawal) can be seized if the solution generated by the two-stage moving horizon models (\textcolor{blue}{\tiny{\sout{$\square$}}}) are picked and scenario $S_7$ revealed. 
In contrast, the dark purple area (\cbox{eafworst}) describes the \emph{worst attainment surface}---the area that guarantees the objective values achievable in all nine scenarios by at least one solution—for instance, choosing the solution generated by the two-stage moving horizon models (\textcolor{blue}{\scriptsize{\sout{$\square$}}}) assures the decision-maker of having at least €$4$ $172$ $400$ at the end of the investment period, although the withdrawal is minimal. 
The trade-offs between objectives in various scenarios can also be tracked in this figure.
Figure \ref{fig:sbheatmaps} helps to get the exact objective values in all selected scenarios and a more detailed trade-off analysis from a different perspective. 

Moreover, as seen in these figures, in most scenarios, the three-stage model provides better achievements for the second objective (total withdrawals). In comparison, the two-stage model contains better achievements for the first objective (available funds). The complex concept of Pareto optimality in the multi-scenario multi-objective optimisation context and existing trade-offs between Optimality in a single scenario and robustness over a broader range of scenarios \citep{Shavazipour2021b} make the comparison challenging. Furthermore, the different time windows in the three-stage and two-stage models may influence the importance weights in each meta-objectives leading to different Pareto optimal solutions. Indeed, it is possible to generate different Pareto optimal solutions (based on the definition of Pareto optimality in multi-scenario multi-objective optimisation in Section 2.1) by using each model. 
However, besides the apparent trade-offs between the objective functions in the first eight scenario paths, the solution generated by the two-stage moving horizon models is dominated in the last scenario ($s_9$) by the solution generated by the three-stage model.
This confirms our claim on the possibility of generating dominated (or infeasible) solutions in some scenarios by the two-stage moving horizon models raised in the previous section. The reason is that the three-stage model is looking further ahead than the two-stage model, considers the consequences of the initial decision, and is expected to find better and more robust solutions than the two-stage model.

\begin{figure}[!ht]
    \centering
    \includegraphics[width=0.7\textwidth]{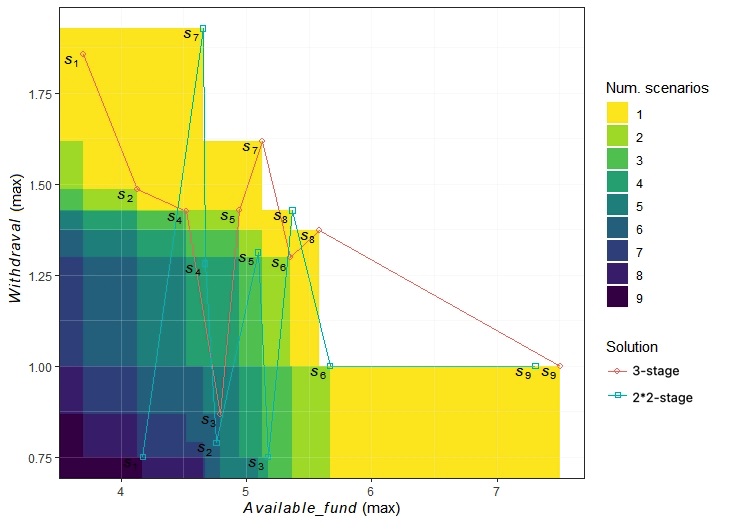}
    
    \caption{Comparing the objective function values attainable by the solutions generated by the three-stage model (\textcolor{red}{\scriptsize{\sout{$\circ$}}}) and two-stage moving horizon models (\textcolor{blue}{\tiny{\sout{$\square$}}}) in all nine scenario paths using the
    all-in-one SB-EAF visualisation.
    Points connected by a line denote a solution evaluated in different scenarios. Coloured areas show regions of the objective space that can be attained within a particular number of scenarios by a solution.}
    \label{fig:sbeaf}
\end{figure}

\begin{figure}
    \centering
    
    \includegraphics[width=0.7\textwidth]{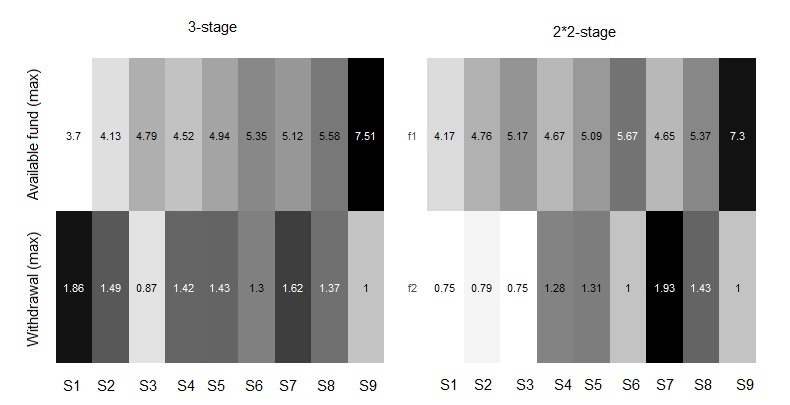}
    \caption{Comparing two solutions generated by the three-stage model and two-stage moving horizon models in nine scenarios via Heatmaps. Darker values are better (both objectives are to be maximised.}
    \label{fig:sbheatmaps}
\end{figure}

\section{Robustness analysis and further discussion}\label{dis}
As seen in the last section, in our example, the solution generated by the three-stage model dominated the solution produced by the two-stage moving horizon in one scenario. However, it was the best-case scenario, and the objective values in other scenarios show two compromise solutions. Under deep uncertainty, we are looking for robust solutions that work satisfactorily in a wider range of scenarios rather than an optimal solution for a specific scenario. Therefore, we also need to compare the generated solutions based on a robustness measure.

To check the solutions' robustness, in this study, we consider the total profit over the given horizon (which is always an essential hidden goal of all decision-makers) as the robustness measure meaning that if a portfolio solution meets the minimum acceptable profit set by the decision-maker in all scenario-paths, it will be considered as a robust portfolio solution. For other popular robustness measures used in decision-making under deep uncertainty, we refer the reader to, e.g., \cite{mcphail2018robustness, Shavazipour2021a}.
Table \ref{tab:benefit23s} evaluates the profit that could be obtained by implementing the solutions generated by each approach after two investment periods. The profit on investments is computed by subtracting the summation of all withdrawals and remaining money in investments at the end of the horizon from our initial capital (i.e., 5 million euros). Here, the decision-maker sets the minimum acceptable profit equal to $10\%$ (i.e., €$500$ $000$). 
Considering the given robustness measure, the solution generated by the three-stage model is robust and guarantees the minimum profit of €$500$ $000$ in all scenarios. In comparison, the solution generated by the two-stage moving horizon models not only failed to reach the robustness measure in the worst-case scenario, but it also gave rise to a €$77$ $640$ loss in the worst-case scenario. Therefore, as expected, the solution generated by the three-stage model is more robust than the solution generated by the other approach.

Surely, looking at more stages must improve the solution. However, does the robustness of the T-stage structure have no cost, or is it always parsimonious? Is getting more robust solutions utilising a T-stage model good enough to warrant the added calculations? In our example, the computation time of generating a solution with both approaches was a few seconds, therefore, using the three-stage model to ensure generating a more robust solution is efficient and reasonable. However, we must warn that, adding more stages would exponentially increase the size of the optimisation model. This would be a major limitation to appending more stages to the model, particularly in complex large-scale problems with many scenarios, and in practice, we may not be able to run a model efficiently with more than two or three stages.  
Then, proving the efficiency of using the T-stage model rather than some successive two-stage models is necessary and needs to be evaluated case by case.

\begin{table}[!h]
\begin{center}
{
\begin{tabular}{|cc|c|c|c|c|c|c|} \hline
   \multicolumn{2}{|c|}{Structure} &                           \multicolumn{3}{|c|}{3-stage} & \multicolumn{3}{|c|}{2$\times$ 2-stage}  \\ \cline{1-8}
 \multicolumn{2}{|c|}{Scenario-path} & Remained fund & Withdrawal & Profit & Remained fund & Withdrawal & Profit \\ \hline
 \multirow{3}{*}{$k(1)=S_2$} & $k(2)=S_1$ & 3 697 200 & 1 857 300   & 554 600     & 4 172 400 & 750 000   & \textcolor{red}{-77 640}   \\ 
                             & $k(2)=S_2$ & 4 127 600 & 1 485 600   & 613 200     & 4 759 800 & 789 930   & 549 710   \\ 
                             & $k(2)=S_3$ & 4 791 900 & 866 700 & 658 500     & 5 172 400 & 750 000 & 922 360   \\
 & & & & & & & \\
 \multirow{3}{*}{$k(1)=S_3$} & $k(2)=S_2$ & 4 521 000 & 1 424 800 & 945 800     & 4 668 200 & 1 282 200   & 950 500  \\ 
                             & $k(2)=S_3$ & 4 944 700 & 1 428 000 & 1 372 700     & 5 092 100 & 1 312 180 & 1 353 200  \\ 
                             & $k(2)=S_4$ & 5 352 800 & 1 297 900 & 1 650 700     & 5 667 500 & 1 000 000 & 1 667 500   \\
  & & & & & & & \\
 \multirow{3}{*}{$k(1)=S_4$} & $k(2)=S_3$ & 5 123 500 & 1 617 600 & 1 741 100     & 4 653 700 & 1 927 800 & 1 581 600    \\ 
                             & $k(2)=S_4$ & 5 583 700 & 1 372 300 & 1 956 000     & 5 366 900 & 1 427 300  & 1 794 200    \\ 
                             & $k(2)=S_5$ & 7 507 800 & 1 000 000 & 3 507 800     & 7 305 600 & 1 000 000 & 3 305 600   \\ \hline \hline

\end{tabular}
}
\end{center}
\caption{Robustness comparison of generated solutions by the three-stage and two-stage moving horizon models based on total profit. }\label{tab:benefit23s}
\end{table}

\section{Conclusions}\label{3sconclusion}

As our world gets distant from what is called a normal situation and becomes more complex, predicting the exact future state of the world seems impossible, particularly in long-term horizons, which makes decision-making extremely challenging. Therefore, finding robust and resilient solutions that perform satisfactorily in many plausible scenarios is getting more attention from many decision-makers rather than finding the optimal solution for an exact state. On the other hand, robust solutions identify with conventional methods, which mainly consider the worst-case scenario, known as too costly and conservatism solutions. Furthermore, in our continuous fast-changing world, long-term planning requires frequent monitoring and adaptations to keep growing and avoid failures. Therefore, to the best of our knowledge, this study presents the first adaptive optimisation framework for multi-criteria robust decision-making under deep uncertainty. Two approaches, the multi-stage multi-scenario multi-objective robust optimisation and the two-stage moving horizon, have been introduced and compared to find dynamic-robust solutions within the proposed framework. The former approach considers the whole planning horizon and generates solutions for multiple stages in one run, while the latter always looks one stage ahead and generates solutions via a two-stage optimisation model. Then, another two-stage model runs after each stage until the entire planning horizon is covered. 

Clearly, since the T-stage model is looking further ahead than the two-stage models in each stage, and the constraints in the T-stage model are simultaneously satisfied, then the optimal solution generated by the T-stage model is no worse than the solutions generated by the two-stage moving horizon models. So, the T-stage model must find a more robust, or at least the same, solution than the two-stage moving horizon models. 
Both approaches have been applied in a case study of sequential portfolio selection with three stages, and their results have been compared, which confirms the fact that the three-stage approach generates more robust solutions. The number of scenario paths in the case study was nine, and corresponding optimisation models were solved quickly. Therefore, using the three-stage model in this case was efficient. 

However, considering more stages exponentially increases the number of scenario paths. Therefore, using an approach with more stages could result in a model that is too complex, computationally expensive and challenging to solve and the efficiency of using such a model must be carefully investigated in each problem. Moreover, in some problems, the degree of uncertainty is so deep that the plausible scenarios in the later stages might change over time, and the set of plausible scenarios must be updated after some stages have passed. In those situations, the two-stage moving horizon approach gives us the opportunity to handle a less computationally (cognitively) expensive problem and/or update the scenario paths as we go into the future. 

\emph{Identifying a way to increase the robustness of the two-stage moving horizon approach}, e.g., by introducing more objective functions or adding some priorities (or weights) for some of the objective functions in some scenarios in the earlier stages, is an interesting study that we leave it as one of our future research directions. Another interesting research direction is to find out \emph{how many stages are worth looking ahead in a continuous planning window when using a moving horizon approach?}. Besides, in this study, we use the decision-maker preferences \emph{a priori} to generate a single Pareto optimal solution. Considering other types of preference incorporation like \emph{a posteriori} or \emph{interactive} methods and conducting a more extensive trade-off and robustness analyses in some real-world applications also lie in the future research directions.

\textcolor{blue}{
\bibliographystyle{agsm}
\bibliography{refs}
}

\section{Appendix}

\subsection{Proof of Theorem~\ref{22sin3s} }\label{proof22sin3s} 
\textbf{Theorem3.2.} Any feasible solution 
to the two-stage moving horizon model 
in a three-stage planning window is a feasible solution for the corresponding three-stage model 
and \textit{vice versa}---i.e., the two-stage moving horizon model is feasible \textbf{\emph{iff}} the three-stage model is feasible. 
\begin{proof}
For simplicity, let us consider the vector form of the models. Model \ref{eq:2svectorform} utilise for the first two-stage model and getting the initial solution($\mathbf{x}^0$). 

\begin{center}
\begin{equation}\label{eq:2svectorform}
 \begin{array}{l l}
Min \quad \mathbf{Z}_{k}= \mathbf{c}^0 \mathbf{x}^0 + (\mathbf{c}_k^0 \mathbf{x}^0 + \mathbf{c}_k^1 \mathbf{x}_k^1);  & k=1,..., p;  \\
 & \\
 s.t.   \mathbf{A}^0 \mathbf{x}^0 \leq \mathbf{b}^0, &  \\
 & \\
   \mathbf{A}_k^0 \mathbf{x}^0 + \mathbf{A}_k^1 \mathbf{x}_k^1 \leq \mathbf{b}_k^1, & k=1,...,p; \\
 & \\
   \mathbf{x}^0, \mathbf{x}_k^1 \geq \mathbf{0}. & k=1,..., p. \\
 & 
\end{array}
\end{equation}
\end{center}

The vector form of the model \ref{eq:22s} as the second two-stage model providing us with the recourse solutions can be rewritten as follows:

\begin{center}
\begin{equation}\label{eq:22svectorform}
 \begin{array}{l l}
Min \quad \mathbf{Z}_{k(1)}= \mathbf{c}_{k(1)}^0 \mathbf{x}_{k(1)}^1 + \mathbf{c}_{k(1)k(2)}^2 \mathbf{x}_{k(1)k(2)}^2;  &  k(2)=1,..., p(2) \\
 s.t.    &  \\
    \mathbf{A}_{k(1)}^1 \mathbf{x}_{k(1)}^1 \leq \mathbf{b}_{k(1)}^1 - \mathbf{A}_{k(1)}^0 \mathbf{x}^{0*}, &  \\
    \mathbf{A}_{k(1)k(2)}^1 \mathbf{x}_{k(1)}^1 + \mathbf{A}_{k(1)k(2)}^2 \mathbf{x}_{k(1)k(2)}^2 \leq \mathbf{b}_{k(1)k(2)}^2 - \mathbf{A}_{k(1)k(2)}^0 \mathbf{x}^{0*}, & k(2)=1,..., p(2), \\
 & \\
   \mathbf{x}_{k(1)}^1, \mathbf{x}_{k(1)k(2)}^2 \geq \mathbf{0}, & k(2)=1,..., p(2). \\
 & 
\end{array}
\end{equation}
\end{center}

Also, the vector form of the three-stage model 
is set down as the following: 
\begin{center}
\begin{equation}\label{eq:3svectorform}
 \begin{array}{l l}
 
 Min \quad \mathbf{Z}_{k(1)}= \mathbf{c'}^0 \mathbf{x}^0 + \mathbf{c'}_{k(1)}^0 \mathbf{x}^0 + \mathbf{c'}_{k(1)}^1 \mathbf{x}_{k(1)}^1  &  k(1)=1,..., p(1); \\
 & \\ 
 
Min \quad \mathbf{Z}_{k(1)k(2)}= \mathbf{c}^0 \mathbf{x}^0 + \mathbf{c}_{k(1)}^0 \mathbf{x}^0 + \mathbf{c}_{k(1)}^1 \mathbf{x}_{k(1)}^1 + \mathbf{c}_{k(1)k(2)}^0 \mathbf{x}^0 & k(1)=1,..., p(1);\\
& \\
   + \mathbf{c}_{k(1)k(2)}^1 \mathbf{x}_{k(1)}^1 + \mathbf{c}_{k(1)k(2)}^2 \mathbf{x}_{k(1)k(2)}^2;  &  k(2)=1,..., p(2);  \\
 & \\
 
 s.t.   \mathbf{A}^0 \mathbf{x}^0 \leq \mathbf{b}^0,(*) &  \\
 & \\

    \mathbf{A}_{k(1)}^0 \mathbf{x}^0 + \mathbf{A}_{k(1)}^1 \mathbf{x}_{k(1)}^1 \leq \mathbf{b}_{k(1)}^1,(**) & k(1)=1,...,p(1); \\ 
 & \\ 
     \mathbf{A}_{k(1)k(2)}^0 \mathbf{x}^0 + \mathbf{A}_{k(1)k(2)}^1 \mathbf{x}_{k(1)}^1 +  \mathbf{A}_{k(1)k(2)}^2 \mathbf{x}_{k(1)k(2)}^2 \leq \mathbf{b}_{k(1)k(2)}^2,(***) & k(1)=1,...,p(1);   \\ 

 & k(2)=1, ...,p(2);\\
   \mathbf{x}^0, \mathbf{x}_{k(1)}^1, \mathbf{x}_{k(1)k(2)}^2 \geq 0, &  k(1)=1,...,p(1); \\
 & k(2)=1, ...,p(2).\\
 &
\end{array}
\end{equation}
\end{center}
 
$\Rightarrow)$ Now, due to the assumption, suppose that the two-stage moving horizon model is feasible, then $ $ $ \forall k=k(1) \in S^1$, and $\forall k(2)\in S^2$, $\exists \underline{X}=(\underline{\mathbf{x}}^0, \underline{\mathbf{x}}_{k(1)}^1)$, and $\exists \overline{\mathbf{X}}=(\overline{\mathbf{x}}_{k(1)}^1, \overline{\mathbf{x}}_{k(1)k(2)}^2), $ $ $ where $\underline{X}$, and $\overline{\mathbf{X}}$, are, respectively, the vectors of feasible solutions for the first(\ref{eq:2svectorform}) and the second(\ref{eq:22svectorform}) two-stage models. Therefore, $\mathbf{X}=(\underline{\mathbf{x}}^0, \overline{\mathbf{x}}_{k(1)}^1, \overline{\mathbf{x}}_{k(1)k(2)}^2)$ is a vector of feasible solutions for the two-stage moving horizon model that satisfies its constraints, then, we have \\

(1) $ $ $ $ $  \underline{\mathbf{x}}^0, \overline{\mathbf{x}}_{k(1)}^1, \overline{\mathbf{x}}_{k(1)}^2  \geq \mathbf{0}, $ $ $ $ \forall k(1), k(2)$; \\

(2) $ $ $ $ $\mathbf{A}^0 \underline{\mathbf{x}}^0 \leq \mathbf{b}^0, \Rightarrow (*)$ is satisfied. \\

(3) $ $ $ $ $ \forall k(1), $ $ \mathbf{A}_{k(1)}^1 \overline{\mathbf{x}}_{k(1)}^1  \leq \mathbf{b}_{k(1)}^1 - \mathbf{A}_{k(1)}^0 \underline{\mathbf{x}}^0 $;\\

$ $

$\Rightarrow \mathbf{A}_{k(1)}^0 \underline{\mathbf{x}}^0 + \mathbf{A}_{k(1)}^1 \overline{\mathbf{x}}_{k(1)}^1  \leq \mathbf{b}_{k(1)}^1 \Rightarrow (**)$, is satisfied. \\

$ $

(4) $ $ $ $ $\forall k(1), k(2), $ $  \mathbf{A}_{k(1)k(2)}^1 \overline{\mathbf{x}}_{k(1)}^1 + \mathbf{A}_{k(1)k(2)}^2 \overline{\mathbf{x}}_{k(1)k(2)}^2   \leq \mathbf{b}_{k(1)k(2)}^2 - \mathbf{A}_{k(1)k(2)}^0 \underline{\mathbf{x}}^0 $;\\

$ $

$\Rightarrow \mathbf{A}_{k(1)k(2)}^0 \underline{\mathbf{x}}^0 + \mathbf{A}_{k(1)k(2)}^1 \overline{\mathbf{x}}_{k(1)}^1 + \mathbf{A}_{k(1)k(2)}^2 \overline{\mathbf{x}}_{k(1)k(2)}^2  \leq \mathbf{b}_{k(1)k(2)}^2 \Rightarrow (***)$, is satisfied. Thus, \\

$ $

$\mathbf{X}$ is a feasible solution for the three-stage model. \\

$ $

$\Leftarrow)$ Suppose that the 3-stage model is feasible, then, \\

$ $

$ $ $ \forall k=k(1) \in S^1$, and $\forall k(2)\in S^2$, $\exists X'=(\widehat{\mathbf{x}}^0, \widehat{\mathbf{x}}_{k(1)}^1, \widehat{\mathbf{x}}_{k(1)k(2)}^2)$, subject to:

$ $

($i$) $ $ $ $ $  \widehat{\mathbf{x}}^0, \widehat{\mathbf{x}}_{k(1)}^1, \widehat{\mathbf{x}}_{k(1)k(2)}^2  \geq \mathbf{0},  $ $ $ $ \forall k(1), k(2) $; \\

($ii$) $ $ $ $ $ \mathbf{A}^0 \widehat{\mathbf{x}}^0 \leq \mathbf{b}^0$;  \\

($iii$) $ $ $ $ $ \forall k(1), $ $ \mathbf{A}_{k(1)}^0 \widehat{\mathbf{x}}^0 + \mathbf{A}_{k(1)}^1 \widehat{\mathbf{x}}_{k(1)}^1  \leq \mathbf{b}_{k(1)}^1;  $\\

($iv$) $ $ $ $ $\forall k(1), k(2), $ $ \mathbf{A}_{k(1)k(2)}^0 \widehat{\mathbf{x}}^0 + \mathbf{A}_{k(1)k(2)}^1 \widehat{\mathbf{x}}_{k(1)}^1 + \mathbf{A}_{k(1)k(2)}^2 \widehat{\mathbf{x}}_{k(1)k(2)}^2   \leq \mathbf{b}_{k(1)k(2)}^2; $\\


$ (i), (ii)$, and $(iii)$, satisfy the constraints of the first two-stage model in \ref{eq:2svectorform}, while $ (i), (iii)$, and $(iv)$, convince the limitations of the model \ref{eq:22svectorform}. Therefore, the two-stage moving horizon model is feasible, and the proof is complete. 

\end{proof}

\subsection{An illustrative example highlighting the robustness of a three-stage model vs. two-stage moving horizon models}\label{app2}

\begin{exmp}\label{exmp1}
Suppose that the following constraints indicate the feasible region for the first two-stage optimisation model of a three-stage planning horizon.

\begin{equation}\label{eq:1st2s}
\begin{array}{l}
x^0 \leq 1, \\
x^0 - x^1 \leq 1, \\
x^0, x^1 \geq 0, \\
\end{array}
\end{equation}

Moreover, the limitations of the second two-stage optimisation model for the appropriate two-stage moving horizon model under conditions of a scenario have been explained in \ref{eq:2nd2s}. 
\begin{equation}\label{eq:2nd2s}
\begin{array}{l}
 x^1 \leq 1- x^{0*}, \\
x^1 + x^2 \leq \dfrac{1}{2} -x^{0*}, \\
x^1, x^2 \geq 0, \\
\end{array}
\end{equation}
where $x^{0*}$ is an optimal solution to the first two-stage optimisation model (model \ref{eq:1st2s}). 

Let consider $\overline{\mathbf{X}}=(\overline{\mathbf{x}}^0, \overline{\mathbf{x}}^1)=(1,0)$, as a feasible solution for \ref{eq:1st2s}. If we substitute $x^{0*}=1$ into \ref{eq:2nd2s}, then no feasible solution can be found. 

$ $

\begin{footnotesize}
$ x^1 \leq 1 - 1 =0 $ $ \rightarrow x^1 \leq 0$  $ $ (*) $ $ $ $ and $ $ $ $ $x^1, x^2 \geq 0$ $ $ (**)\\

(*) and (**) imply that $ x^1=0,$ $ $ $\Rightarrow$ $ $ $x^1 + x^2 = 0 + x^2 \leq \dfrac{1}{2} -1=-\dfrac{1}{2} $ $ \longrightarrow $ $ x^2 \leq -\dfrac{1}{2} $ \contradiction  \textit{``Contradiction!''}.
\end{footnotesize}

$ $

 The feasible region for the corresponding three-stage model can be described by equations in \ref{eq:1st3s}, which includes simultaneous satisfaction of all the constraints in the first and second two-stage optimisation models (models \ref{eq:1st2s} and \ref{eq:2nd2s}). 

\begin{equation}\label{eq:1st3s}
\begin{array}{l}
x^0 \leq 1,  \\
x^0 - x^1 \leq 1,\\
x^0 + x^1 + x^2 \leq \dfrac{1}{2},\\
x^0, x^1, x^2 \geq 0,\\
\end{array}
\end{equation}

$ $

In contrast to the 2 $\times$ two-stage models, too many feasible solutions can be found for the corresponding three-stage model in the same scenario, such as $x^0=x^1=x^2=\dfrac{1}{n}, \forall n>6$. For example, by setting ``$n=\dfrac{1}{8}$'', we have

\begin{footnotesize}
$ $ $ $ $ $ $ \dfrac{1}{8} \leq 1$,\\

$ $ $ $ $ $ $ \dfrac{1}{8} - \dfrac{1}{8}=0 \leq 1$,\\

$ $ $ $ $ $ $ \dfrac{1}{8} + \dfrac{1}{8} + \dfrac{1}{8}=\dfrac{3}{8} \leq \dfrac{1}{2}$,\\

$ $ $ $ $ $ $ \dfrac{1}{8} \geq 0$.\\
\end{footnotesize}

\end{exmp}

\subsection{Two-stage model for sequential portfolio problem in Section \ref{portfolio}}\label{2s_portfolio}

\begin{center}
\begin{equation}\label{eq:e32s}
 \begin{array}{l l}

Max \quad Z_{1k(1)}= \sum_{j=1}^5 \sum_{i=1}^5 x_{ij}^0 + \sum_{i=1}^5 x_{i7k(1)}^1 & \forall k(1);\\
&\\

Max \quad Z_{2k(1)}= \sum_{i=1}^5 (x_{i6}^0 +  x_{i6k(1)}^1)  & \forall k(1);\\
& \\
 s.t.     \\
 
(\textit{Funds balance constraints}) &\\

  \sum_{j=1}^6 (1+p_{ij})x_{ij}^0 = b_i^0, & i=1,...,5; \\
 & \\
 
   (1+p_{i6}) x_{i6k(1)}^1 + x_{i7k(1)}^1 =\sum_{j=1}^5 (1+c_{ik(1)}) x_{ji}^0  & \forall i, k(1); \\ 
 & \\

 (\textit{Minimum Withdrawal constraints})&\\
 
  \sum_{i=1}^5  x_{i6}^0 \geq b_{6}^0, &  \\ 
 &\\
   \sum_{i=1}^5  x_{i6k(1)}^1 \geq b_{6k(1)}^1, & \forall k(1); \\ 
&\\
 
 (\textit{Non-negativity constraints})&\\
 
   x_{ij}^0, x_{i6k(1)}^1, x_{i7k(1)}^1 \geq 0. & i=1,...,5; j=1,...,5; \forall k(1);\\
&  

\end{array}
\end{equation}
\end{center}

\subsection{Three-stage GP model for sequential portfolio problem in Section \ref{portfolio}} \label{3sRGP}

\begin{center}
\begin{equation}\label{eq:e33sGP}
 \begin{array}{l l}
 Min \quad  \psi = \phi  + \epsilon \sum_{k(2)\in K_2} \sum_{k(1)\in K_1} \sum_{n=1}^2 (\ \delta_{nk(1)k(2)})  \\
  & \\
 s.t.     \delta_{nk(1)k(2)} - \phi \leq  0, &      n=1,2, \forall k(1), k(2); \\
 & \\
  Z_{nk(1)k(2)} - \delta_{nk(1)k(2)} = g_{nk(1)k(2)}, &      n=1,2, \forall k(1), k(2); \\

& \\
 
(\textit{Funds balance constraints}) &\\

   \sum_{j=1}^6 (1+p_{ij})x_{ij}^0 = b_i^0, & i=1,...,5; \\
 & \\
  \sum_{j=1}^6 (1+p_{ij}) x_{ijk(1)}^1 = \sum_{j=1}^5 (1+c_{ik(1)}) x_{ji}^0   & i=1,2,3,4,5;\\
 & \forall k(1); \\
 & \\
 
    (1+p_{i6}) x_{i6k(1)k(2)}^2 + x_{i7k(1)k(2)}^2 =\sum_{i=1}^5 (1+c_{jk(1)k(2)}) x_{jik(1)}^1   & \forall i, k(1), k(2); \\ 
 & \\

 (\textit{Minimum Withdrawal constraints})&\\
 
  \sum_{i=1}^5  x_{i6}^0 \geq b_{6}^0, &  \\ 
 &\\
   \sum_{i=1}^5  x_{i6k(1)}^1 \geq b_{6k(1)}^1, & \forall k(1); \\ 
&\\
   \sum_{i=1}^5  x_{i6k(1)k(2)}^0 \geq b_{6k(1)k(2)}^2, & \forall k(1), k(2); \\ 
 & \\

 (\textit{Non-negativity constraints})&\\
 
   x_{ij}^0, x_{ijk(1)}^1, x_{i6k(1)k(2)}^2, x_{i7k(1)k(2)}^2 \geq 0. & i=1,...,5; j=1,...,6; \\
& \forall k(1), k(2). \\
  \phi, \delta_{nk(1)k(2)} \textit{free of sign.} & \forall n, k(1), k(2). \\
 &
\end{array}
\end{equation}
\end{center}

\subsection{Two-stage GP model for sequential portfolio problem in Section \ref{portfolio}}\label{2sRGP}

\begin{center}
\begin{equation}\label{eq:e32sGP}
 \begin{array}{l l}
 Min \quad  \psi = \phi  + \epsilon  \sum_{k(1)\in K_1} \sum_{n=1}^2 ( \delta_{nk(1)})  \\
  & \\
 s.t.    \delta_{nk(1)} - \phi \leq  0, &      n=1,2, k(1)\in K_1; \\
 & \\
  z_{nk(1)} - \delta_{nk(1)} = g_{nk(1)}, &      n=1,2, k(1)\in K_1; \\
& \\
 
(\textit{Funds balance constraints}) &\\

   \sum_{j=1}^6 (1+p_{ij})x_{ij}^0 = b_i^0, & i=1,...,5; \\
 & \\
 
   (1+p_{i6}) x_{i6k(1)}^1 + x_{i7k(1)}^1 =\sum_{j=1}^5 (1+c_{ik(1)}) x_{ji}^0  & \forall i, k(1); \\ 
 & \\

 (\textit{Minimum Withdrawal constraints})&\\
 
  \sum_{i=1}^5  x_{i6}^0 \geq b_{6}^0, &  \\ 
 &\\
   \sum_{i=1}^5  x_{i6k(1)}^1 \geq b_{6k(1)}^1, & \forall k(1); \\ 
&\\
 
 (\textit{Non-negativity constraints})&\\
 
   x_{ij}^0, x_{i6k(1)}^1, x_{i7k(1)}^1 \geq 0. & i=1,...,5; j=1,...,5; \forall k(1);\\
&  \\
  \phi, \delta_{nk(1)}  \textit{free of sign.} & \forall n, k(1). \\
 &
\end{array}
\end{equation}
\end{center}

\subsection{Notations for sequential portfolio problem }\label{app1} 
\begin{table}[htbp!]
\begin{center}
{
\begin{tabular}{|ll|} \toprule
 \textbf{Notation} &\\
$I; I={1,2,3,4,5}$ : & Set of investment options.\\

& \\
\textbf{Decision variables (stage `0')}&   \\ 
 
 $ x_{ij}^0 \in \Re $ : & Amount of fund transferring from investment option $i \in I$ to \\
 & investment option $j \in I$ in stage `0'. \\ 
 
  $ x_{i6}^0 \in \Re $ : & Amount of fund withdrawing from investment option $i \in I$ to \\
  & consumption expenditure in stage `0'. \\ 
\textbf{Decision variables (stage `1')} &   \\ 
  \emph{2-stage model} &\\
 $ x_{i6k(1)}^1 \in \Re $ : & Amount of fund withdrawing from investment option $i \in I$ \\
  & if scenario $S_{k(1)}$ revealed ($S_{k(1)}\in{S_2, S_3, S_4}$). \\ 
 $ x_{i7k(1)}^1 \in \Re $ : & Amount of fund available in investment option $i \in I$ \\
  & if scenario $S_{k(1)}$ revealed ($S_{k(1)}\in{S_2, S_3, S_4}$). \\ 
 \emph{3-stage model} & \\
$ x_{ijk(1)}^1 \in \Re $ : & Amount of fund transferring from investment option $i \in I$ to \\
& investment option $j \in I$ in stage `1',\\
& if state $S_{k(1)}$ revealed ($S_{k(1)}\in{S_2,S_3,S_4}$). \\

\textbf{Decision variables (stage `2')}&   \\

  $ x_{i6k(1)k(2)}^2 \in \Re $ : & Amount of fund withdrawing from investment option $i \in I$ \\
  & if meta-scenario $S_{k(1)k(2)}$ revealed ($S_{k(1)k(2)}\in{S_1, S_2, S_3, S_4, S_5}$). \\ 
 $ x_{i7k(1)k(2)}^2 \in \Re $ : & Amount of fund available in investment option $i \in I$ \\
  & if meta-scenario $S_{k(1)k(2)}$ revealed ($S_{k(1)k(2)}\in{S_1, S_2, S_3, S_4, S_5}$). \\ 
 
 \textbf{Parameters} &\\
 $ c_{jk(1)}$: & The percentage of the growth of investment option $j\in I$, \\
 & if state $S_{k(1)}$ revealed ($S_{k(1)}\in{S_2,S_3,S_4}$).\\
 
 $ c_{jk(1)k(2)}$: & The percentage of the growth of investment option $j\in I$ \\
 & if meta-scenario $S_{k(1)k(2)}$ revealed ($S_{k(1)k(2)}\in{S_1, S_2, S_3, S_4, S_5}$).\\
$ p_{ij} $: & Percentage of loss of funds (penalty cost) for transferring \\
& between each pair of investment$(i,j)$.\\
$ p_{i6} $: & Percentage of loss of funds (penalty cost) for withdrawal money \\
& from investment $i \in I$.\\
&\\  
 
 $ b_i^0 $: & Available funds in investment options $ i \in I $.\\
 &\\
 $ b_6^0 $: & Minimum required fund to withdrawal at stage `0'.\\
 &\\
 $ b_{6k(1)}^1 $: & Minimum required fund to withdrawal at stage `1'.\\
 & if state $S_{k(1)}$ revealed ($S_{k(1)}\in{S_2,S_3,S_4}$).\\
 $ b_{6k(1)k(2)}^1 $: & Minimum required fund to withdrawal at stage `1'.\\
 & if meta-scenario $S_{k(1)k(2)}$ revealed ($S_{k(1)k(2)}\in{S_1, S_2, S_3, S_4, S_5}$).\\
 
 &\\
$g_{nk(1)} \in \Re , $ : & Goal $n$ in scenario $k(1)$ in 2-stage model.\\

$\delta_{nk(1)} \in \Re , $ : & Deviations from the goal $n$ in scenario $k(1)$ in 2-stage model.\\
&\\
$g_{nk(1)k(2)} \in \Re , $ : & Goal $n$ in meta-scenario $k(1)k(2)$ in 3-stage model.\\

$\delta_{nk(1)k(2)} \in \Re , $ : & Deviations from the goal $n$ in meta-scenario $k(1)k(2)$ in 3-stage model.\\

\hline
      
\end{tabular}
}
\end{center}
\caption[Example \ref{e3} variables notation]{Variables notation} \label{tab:var}
\end{table}

\end{document}